\theoremstyle{plain}
\newtheorem{thm}{Theorem}[section]
\newtheorem{lem}[thm]{Lemma}
\newtheorem{cor}[thm]{Corollary}
\newtheorem{prop}[thm]{Proposition}
\theoremstyle{definition}
\newtheorem{defn}[thm]{Definition}
\newtheorem{question}[thm]{Question}
\newtheorem{rmk}[thm]{Remark}
\newtheorem{rmks}[thm]{Remarks}
\newcommand{\ed}{{\rm ed}}
\newcommand{\Hom}{{\rm Hom}}
\newcommand{\im}{{\rm im}}
\newcommand{\Spec}{{\rm Spec \,}}
\newcommand{\Char}{{\rm char}}
\newcommand{\trdeg}{{\rm trdeg}}
\newcommand{\Gal}{{\rm Gal}}
\newcommand{\sF}{{\mathcal F}}
\newcommand{\sP}{{\mathcal P}}
\newcommand{\sR}{{\mathcal R}}
\newcommand{\A}{{\mathbb A}}
\newcommand{\F}{{\mathbb F}}
\newcommand{\G}{{\mathbb G}}
\renewcommand{\H}{{\mathbb H}}
\newcommand{\Z}{{\mathbb Z}}
\def\NDT{{\fontencoding{T5}\selectfont Nguy\~ \ecircumflex n Duy T\^an}}
\def\NQT{{\fontencoding{T5}\selectfont Nguy\~ \ecircumflex n Qu\'\ocircumflex{}c Th\'\abreve{}ng }}
\begin{document}
\title[Essential dimension of unipotent groups]{On the essential dimension of  unipotent algebraic groups} 
 \author{ \NDT }
 \address{ Universit\"at Duisburg-Essen, FB6, Mathematik, 45117 Essen, Germany and Institute of Mathematics, 18 Hoang Quoc Viet, 10307, Hanoi - Vietnam } 
\email{duy-tan.nguyen@uni-due.de}
\thanks{Partially supported by the NAFOSTED, the SFB/TR45 and the ERC/Advanced Grant 226257}

\begin{abstract}
We give an upper bound for the essential dimension of a smooth unipotent algebraic group over an arbitrary field. We also show that over a field $k$ which is finitely generated over a perfect field, a smooth unipotent  algebraic $k$-group is of essential dimension 0 if and only if it is $k$-split.

AMS Mathematics Subject Classification (2010): 11E72 (20D15).
\end{abstract}

\maketitle

\section{Introduction}
Let $k$ be a base field, ${\rm Fields}_k$ the category of field extensions $K/k$, ${\rm Sets}$ the category of sets. Let $\sF : {\rm Fields}_k \to {\rm Sets}$ be a covariant functor. 
Given a field extension $K/k$, we will say that $a \in \sF(K)$ {\it descends} to an intermediate field $k \subset K_0 \subset K$ if $a$ is in the image of the induced map $F(K_0) \to F(K)$. 
The {\it essential dimension} $\ed_k(a)$ of $a \in \sF(K)$ is the minimum of the transcendence degrees ${\rm trdeg}_k(K_0)$ taken over all fields $k \subset K_0 \subset K$ such that $a$ descends to $K_0$. 
The essential dimension $\ed_k(\sF)$ of the functor $\sF$ is the supremum of $\ed(a)$ taken over all $a \in \sF(K)$ with $K$ in ${\rm Fields}_k$.

If $G$ is an algebraic group over $k$, we write $\ed_k(G)$  for the essential dimension of the functor $K\mapsto H^1_{\rm fppf}(K,G)$. The notion of essential dimension of a finite group is introduced by Buhler and Reichstein (\cite{BR}). 
The definition of the essential dimension of a functor is a generalization given later by Merkujev (\cite{BF}). 
In \cite{BRV1}, the authors introduce a notion of essential dimension for algebraic stack, see also \cite{BRV2}. Nowadays, studying essential dimension is an active area. See \cite{Re} and reference therein.

Computing  the essential dimension of algebraic groups is, in general, a hard problem. By the work of \cite{Fl, KM}, one  now can compute the essential dimension of finite (abstract) $p$-groups over a field of characteristic different from $p$. 
In \cite{LMMR}, the authors study also the essential dimension of algebraic tori. However, we do not know much about the essential dimension of finite $p$-groups over a field of characteristic $p>0$ in particular, and the essential dimension of unipotent algebraic groups in general. 
Let $k$ be a field of characteristic $p>0$ and $G$ be a finite $p$-group of order $p^n$. Then, Ledet \cite{Le} shows that $\ed_k(G)\leq n$. He also conjectures that $\ed_k(\Z/p^nZ)=n$. 
As noted by Reichstein \cite[Subsection 7.3]{Re}: This seems to be out of reach at the moment, at least for $n\geq 5$. 
Tossici and Vistoli \cite{TV} shows also  that the above inequality, $\ed_k(G)\leq n$, still holds for any finite (not necessarily smooth) trigonalizable $k$-group scheme $G$ of order $p^n$, where $p=\Char k$.

In this paper, we study the essential dimension of a unipotent algebraic group over a field. An {\it algebraic group} over a field $k$ is a $k$-group scheme of finite type over $k$. 
The smooth affine algebraic $k$-groups considered here are the same as linear algebraic groups defined over $k$ in the sense of \cite{Bo}.  
Recall that an affine algebraic $k$-group $G$ is called {\it unipotent} if $G_{\bar{k}}$ (the base change of $G$ to a fixed algebraic closure $\bar{k}$ of $k$) admits a finite composition series over $\bar{k}$ with each successive quotient isomorphic to a $\bar{k}$-subgroup of the additive group $\G_a$.
It is well-known  that an affine algebraic $k$-group $G$ is unipotent if and only if is $k$-isomorphic to a closed $k$-subgroup scheme of the group $T_n$ consisting of upper triangular matrices of order $n$ with all 1 on the diagonals, for some $n$. 

A smooth unipotent algebraic group $G$ over a field $k$ is called $k$-{\it split} if it admits a composition series by $k$-subgroups with successive quotients are $k$-isomorphic to the additive group $\G_a$. 
We say that $G$ is $k$-{\it wound} if every map of $k$-scheme $\A^1_k\to G$ is a constant map to a point in $G(k)$.

For any smooth unipotent algebraic group $G$ defined over $k$, there is a maximal $k$-split $k$-subgroup $G_s$, and it enjoys the following properties: it is normal in $G$, the quotient $G/G_s$ is $k$-wound and the formation of $G_s$ commutes with separable (not necessarily algebraic) extensions, see \cite[Chapter V, 7]{Oe} and \cite[Theorem B.3.4]{CGP}. 
The group $G_s$ is called the $k$-{\it split part} of $G$. We obtain the following result. 
\begin{thm}
\label{thm:main}
 Let $G$ be a smooth unipotent algebraic group over a field $k$,  $G_s$ its $k$-split part and let $H$  be the quotient $G/G_s$. Let $H^0$ be the identity component of $H$. Let $p^n$ be the order of $H/H^0$ if $p=\Char(k)>0$  and let $n=0$ if $\Char(k)=0$.  Then 
\[\ed_k(G)\leq \ed_k(H/H_0)+\dim (G/G_s)\leq n+\dim (G/G_s).\]
\end{thm}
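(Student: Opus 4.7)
The second inequality, $\ed_k(H/H^0) \le n$, is immediate from the Tossici-Vistoli bound \cite{TV}: when $\Char k = p > 0$, the component group $H/H^0$ is a finite étale $k$-group scheme which, as a quotient of the unipotent group $H$, is a $p$-group of order $p^n$ and hence trigonalizable; in characteristic zero, $H/H^0$ is trivial and $n = 0$. Thus the substance of the theorem is the first inequality
\[
\ed_k(G) \le \ed_k(H/H^0) + \dim(G/G_s),
\]
which I plan to establish through two successive reductions, combined using $\dim H^0 = \dim H = \dim(G/G_s)$:
\[
\ed_k(G) \le \ed_k(H), \qquad \ed_k(H) \le \ed_k(H/H^0) + \dim H^0.
\]

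For the first reduction my aim is to prove that the map of pointed sets $H^1_{\mathrm{fppf}}(K,G) \to H^1_{\mathrm{fppf}}(K,H)$ induced by $G \twoheadrightarrow H$ is a bijection for every field extension $K/k$; this yields $\ed_k(G) \le \ed_k(H)$ at once, since descent of a class in $H^1(K,G)$ to a subfield $K_0$ is equivalent to descent of its image in $H^1(K_0,H)$. Since $G_s$ is $k$-split, it admits a composition series with successive quotients isomorphic to $\G_a$, and using $H^i(K,\G_a) = 0$ for $i\ge 1$ a devissage yields $H^1(K, G_s) = 0$ (ensuring singleton fibres, hence injectivity) and $H^2(K, G_s) = 0$ (ensuring surjectivity via the obstruction to lifting). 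The critical technical point is to extend these vanishings to every twisted form $G_s^\alpha$ arising from a cocycle $\alpha \in Z^1(K, G)$, as required by the twisting argument. Here I will exploit that $G_s$ is a \emph{characteristic} subgroup of $G$: it is the unique maximal $k$-split $k$-subgroup and its formation commutes with separable base change, so every $K$-group automorphism of $G_K$ preserves $(G_s)_K$. Consequently $G_s^\alpha$ identifies with the $K$-split part of the twisted unipotent $K$-group $G^\alpha$, and is itself $K$-split, whereupon the cohomology vanishing applies verbatim. When $G_s$ is non-abelian, the higher cohomology is treated by induction on a central filtration with abelian successive quotients.

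For the second reduction I plan to use the standard subadditivity of essential dimension along a short exact sequence of algebraic groups, combined with the classical bound $\ed_k(L) \le \dim L$ for any connected smooth linear algebraic $k$-group $L$ (realised by a generically free linear action on an affine space of dimension $\dim L$). Concretely, given $\xi \in H^1(K,H)$ with image $\bar\xi \in H^1(K,H/H^0)$, I first descend $\bar\xi$ to a subfield $K_0 \subset K$ of transcendence degree at most $\ed_k(H/H^0)$; the fibre of $H^1(-,H) \to H^1(-,H/H^0)$ over $\bar\xi_0$ is a torsor under $H^1(-,(H^0)^{\bar\xi_0})$, so adjusting by at most $\ed_k((H^0)^{\bar\xi_0}) \le \dim H^0$ further transcendence degrees suffices to realise $\xi$ on a field of the required size. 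The principal obstacle is the bijectivity claim of the first reduction: while the split $\G_a$-cohomology is elementary, the passage to arbitrary twists of $G_s$ hinges on the characteristic-subgroup argument, and the non-abelian devissage needed for the surjectivity step will require careful bookkeeping.
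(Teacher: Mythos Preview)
Your two-step reduction strategy---first passing from $G$ to $H=G/G_s$ via the bijection $H^1(K,G)\simeq H^1(K,H)$, then applying subadditivity along $1\to H^0\to H\to H/H^0\to 1$---is exactly the paper's approach. Your argument for the first reduction (twisted forms of $G_s$ remain split because $G_s$ is characteristic and its formation commutes with separable extensions) is correct and in fact spells out what the paper merely cites as Lemma~\ref{lem:surj}.

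The genuine gap is in your second reduction. You invoke ``the classical bound $\ed_k(L)\le\dim L$ for any connected smooth linear algebraic $k$-group $L$,'' but this is \emph{false} in general: for example $\ed(\mathrm{Spin}_n)$ grows exponentially in $n$ while $\dim\mathrm{Spin}_n$ is quadratic. Your proposed justification is also confused: a generically free action on an affine space of dimension $\dim L$ would give $\ed(L)\le\dim L-\dim L=0$, and such small generically free representations rarely exist anyway. What you actually need is the inequality $\ed_E(L)\le\dim L$ for smooth connected \emph{unipotent} $L$, applied to the twisted form $(H^0)^{\bar\xi_0}$. This is the paper's Corollary~\ref{cor:connected}, and its proof is not a formality: it passes to the $k$-wound quotient and then uses Tits' cckp-kernel filtration (Proposition~\ref{prop:cckp}), together with the fact that over an infinite field a smooth commutative $p$-torsion unipotent group is the kernel of a separable $p$-polynomial $\G_a^n\to\G_a$ and hence has essential dimension $\le 1$ (Lemma~\ref{lem:1}). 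This structural input is the piece your outline is missing; once you replace the false general claim by this unipotent-specific bound, the rest of your argument goes through and coincides with the paper's proof.
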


In Section 2, we prove a technical result, Proposition \ref{prop:tech}, which is needed in proving Theorem \ref{thm:main}. In \cite[Lemma 3.4]{TV}, the authors  prove the proposition 
 for (not necessarily smooth) affine group schemes but under the assumption that $A$ is a \emph{commutative} unipotent normal subgroup scheme of $B$ (notations as in Proposition \ref{prop:tech}). 
In fact, they need the commutativity property of $A$ in their proof. 
Since  all groups considered in  Proposition \ref{prop:tech} are supposed to be smooth, we can use the language of cocycles and non-abelian cohomology theory as developed in \cite{Se2} and we can relax the commutativity condition on $A$.

In Section 3, we give some results concerning the essential dimension of  finite \'etale group schemes of $p$-power order  over fields of characteristic $p>0$. Some of the results are already appeared in \cite{JLY} in the case of finite abstract $p$-groups.

In this Section 4, we first give an upper bound for the essential dimension of smooth connected unipotent algebraic groups and then by combining with a result in Section 3, we prove Theorem \ref{thm:main}.

In the last section, we study smooth unipotent algebraic groups of essential dimension $0$.  Let $G$ be an smooth affine algebraic  group over a field $k$. It can be shown that $\ed_k(G)=0$ if and only if $G$ is {\it special}, 
i.e., for any field extension $L/k$, every $G$-torsor over $\Spec L$ is trivial, 
see \cite[Proposition 4.4]{Me} and \cite[Proposition 4.3]{TV}. Special groups are introduced by Serre in \cite{Se1}. Over algebraic closed fields, they are classified by Grothendieck \cite{Gro}.

Studying smooth unipotent algebraic groups of essential dimension $0$ is therefore equivalent to studying smooth unipotent algebraic groups which are special. It is well-known that over a perfect field $k$, every smooth connected unipotent group $G$ is $k$-split (see e.g. \cite[Chapter V, Corollary 15.5 (ii)]{Bo}), and hence special. Therefore, over a perfect field, a  smooth unipotent group is special if and only if it is $k$-split. (Note that a special algebraic group is always connected \cite{Se1}.)
It turns out that this statement still holds true over certain fields, e.g., fields which are finitely generated over a perfect field.
Namely, we have 
\begin{thm}
\label{thm:main2}
Let $k_0$ be a field of characteristic $p>0$, $v$ a  valuation of $k_0$. We assume that there is a $k_0^p$-basis $\{e_1,\ldots, e_n\}$ of $k_0$ such that $v(e_1),\ldots, v(e_n)$ are pairwise distinct modulo $p$. 
Let $k$ be a finite extension of $k_0$. Let $G$ be a non-trivial smooth unipotent algebraic $k$-group. Then $G$ is special if and only if $G$ is $k$-split.
\end{thm}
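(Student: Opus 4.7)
The easy direction ($k$-split $\Rightarrow$ special) is standard: by induction on a composition series of $G$ with $\G_a$-quotients and the vanishing of $H^i(K,\G_a)$ for every field extension $K/k$ and every $i\geq 1$, one obtains $H^1(K,G)=*$ for all $K$, so $G$ is special.

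For the converse, I argue by contradiction. Suppose $G$ is special; let $G_s$ be its $k$-split part and set $H:=G/G_s$. Since special groups are connected, $H$ is a smooth connected $k$-wound unipotent group. The first step is to show $H$ itself is special. The exact sequence $1\to G_s\to G\to H\to 1$ together with the standard twisting argument in non-abelian cohomology for smooth groups (\cite[Chap.~I]{Se2}, essentially the mechanism that powers Proposition~\ref{prop:tech}) gives surjectivity of $H^1(K,G)\to H^1(K,H)$ for every $K/k$: every inner twist of $G_s$ is again $K$-split unipotent---as can be checked from the composition series together with Hilbert 90 for $\G_m=\mathrm{Aut}(\G_a)$---and so has vanishing $H^1$, killing the obstruction to lifting torsors. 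Since $H^1(K,G)=*$ by hypothesis, this forces $H^1(K,H)=*$ for all $K/k$. It therefore remains to show that, under our hypotheses on $(k_0,v)$ and $k/k_0$, no non-trivial smooth connected $k$-wound unipotent group can be special.

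To establish this I plan to exhibit directly a non-trivial torsor using the valuation structure. By the Tits--Oesterl\'e structure theory of wound unipotent groups (\cite[App.~B]{CGP}), $H$ admits a non-trivial smooth connected commutative wound $k$-subgroup $H_0$ that is \emph{basic wound}, namely $H_0\cong\ker(F\colon\G_a^n\to\G_a^m)$ for some $p$-polynomial morphism $F$ whose principal part is anisotropic over $k$. The exact sequence $0\to H_0\to\G_a^n\to\G_a^m\to 0$ together with $H^i(K,\G_a)=0$ identifies $H^1(K,H_0)$ with the cokernel of $F$ on $K$-points. Extending $v$ to $k$ (for which the pairwise distinct residues modulo $p$ of $v(e_i)$ persist, using finiteness of $k/k_0$), and passing to a suitable finitely generated extension $K/k$ such as $K=k(t)$, the valuation hypothesis forbids the leading-valuation cancellations needed to realise a carefully chosen element of $K^m$ as a value of $F$; this produces a non-trivial class $c\in H^1(K,H_0)$, and the induced $H$-torsor along $H_0\hookrightarrow H$ is shown to remain non-trivial using the explicit form of the embedding supplied by the structure theory.

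I expect the main obstacle to be the final valuation-theoretic non-triviality of $c$: the pairwise distinct residues of $v(e_i)$ modulo $p$ are used precisely to exclude every cancellation of leading terms in the image of $F$, an Artin--Schreier-style mechanism. A secondary delicate point is verifying that this non-triviality is preserved under induction from $H_0$ to $H$, which hinges on a careful choice of $H_0$ (so that the induced-torsor map $H^1(K,H_0)\to H^1(K,H)$ does not annihilate $c$) and on the higher-cohomology vanishing mechanism already employed in Proposition~\ref{prop:tech}.
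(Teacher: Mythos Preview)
Your reduction to a non-trivial smooth connected $k$-wound group $H=G/G_s$ is fine (this is exactly Lemma~\ref{lem:surj}, second part). After that, the plan has two genuine gaps.

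\textbf{Wrong direction of functoriality.} You propose to find a commutative $p$-torsion wound \emph{subgroup} $H_0\subset H$, produce a non-trivial class $c\in H^1(K,H_0)$, and push it forward along $H_0\hookrightarrow H$. There is no reason the map $H^1(K,H_0)\to H^1(K,H)$ should preserve non-triviality: for $H_0$ normal, the classes killed are precisely those hit by the connecting map from $(H/H_0)(K)$, and nothing in the structure theory rules this out for your specific $c$. Calling this ``delicate'' understates the problem---you have no mechanism at all. The paper goes the other way: Lemma~\ref{lem:surj} gives a \emph{surjection} $H^1(K,H)\twoheadrightarrow H^1(K,H/N)$ for any normal unipotent $N$, and the cckp-kernel filtration furnishes a quotient of $H$ that is commutative, $p$-torsion, and still wound. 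So one only needs $H^1\neq 0$ for that quotient.

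\textbf{The missing dimension constraint and the Weil-restriction trick.} Even in the commutative $p$-torsion wound case, the valuation argument does \emph{not} produce a non-trivial class unconditionally. Writing the group as the zero locus of a separable $p$-polynomial $P$ in $r=\dim H+1$ variables, the combinatorial count (Proposition~\ref{prop:TT}) that forces $k/\im P$ to be infinite requires $s<p^{md}$, and the proof of Proposition~\ref{prop:dim<p-1} shows this is guaranteed only when $\dim H<[k:k^p]-1$. For an arbitrary wound group this inequality can fail. You also assume the valuation-basis hypothesis transfers from $k_0$ to the finite extension $k$, which is not justified. The paper's key device---entirely absent from your sketch---is to adjoin variables: set $K=k_0(y_1,\dots,y_m)$ and $L=k(y_1,\dots,y_m)$, and replace $G$ by the Weil restriction $\sR_{L/K}(G_L)$. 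Then $K$ inherits a $K^p$-valuation basis from $k_0$ (Lemma~\ref{lem:extension1}), and by choosing $m$ large one arranges $\dim\sR_{L/K}(G_L)\le[k:k_0]\dim G<[k_0:k_0^p]p^m-1=[K:K^p]-1$, so Proposition~\ref{prop:dim<p-1} applies over $K$. Lemmas~\ref{lem:Weil restriction} and~\ref{lem:SP} then transfer the conclusion back to $G$ over $k$. Your remark about ``passing to $K=k(t)$'' hints at enlarging $[K:K^p]$ but does not identify this as the purpose, does not quantify how many variables are needed, and does not explain how to secure a valuation basis on $K$ given only the hypothesis on $k_0$.
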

This theorem yields the following corollary (see Corollary \ref{cor:geometric} for a more general statement).
\begin{cor}
\label{cor:main3}
 Let $k$ be a field which is finitely generated  over a perfect field. Let $G$ be a non-trivial smooth unipotent algebraic $k$-group. Then $\ed_k(G)=0$ if and only if $G$ is $k$-split.
\end{cor}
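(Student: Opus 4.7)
The plan is to deduce Corollary \ref{cor:main3} from Theorem \ref{thm:main2} by exhibiting a valued subfield of $k$ satisfying the hypothesis of that theorem, with the characteristic-zero case disposed of separately.

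First, the implication ``$G$ is $k$-split $\Rightarrow \ed_k(G)=0$'' holds over any base field: given a composition series $G=G_0\supset G_1\supset\cdots\supset G_m=1$ with each $G_i/G_{i+1}\cong\G_a$, the non-abelian cohomology sequence $H^1(K,G_{i+1})\to H^1(K,G_i)\to H^1(K,\G_a)=0$, together with Hilbert 90 for $\G_a$ and induction on $m$, gives $H^1(K,G)=0$ for every $K/k$; hence $G$ is special, equivalently $\ed_k(G)=0$. For the converse, suppose $\ed_k(G)=0$. If $\Char k=0$, then $k$ is perfect and every smooth unipotent $k$-group is connected (there are no nontrivial finite unipotent groups in characteristic 0), so $G$ is $k$-split by the classical result recalled in Section 1.

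Assume henceforth $\Char k=p>0$, and let $F\subseteq k$ be a perfect subfield over which $k$ is finitely generated. Pick a transcendence basis $t_1,\ldots,t_r$ of $k$ over $F$ and set $k_0:=F(t_1,\ldots,t_r)$, so that $k/k_0$ is finite. Equip $k_0$ with the monomial (Abhyankar) valuation $v$ having value group $\Z^r$ ordered lexicographically, specified by $v|_{F^\times}=0$ and $v(t_i)=\mathbf{e}_i$, the $i$-th standard basis vector of $\Z^r$; extend it to polynomials by the minimum (in lex order) over the monomials actually occurring, and to rational functions by $v(f/g)=v(f)-v(g)$. Since $F$ is perfect, $k_0^p=F(t_1^p,\ldots,t_r^p)$, and the monomials $t^\alpha:=t_1^{\alpha_1}\cdots t_r^{\alpha_r}$ with $0\leq\alpha_i<p$ form a $k_0^p$-basis of $k_0$ of size $p^r$. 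Their valuations are the vectors $\alpha\in\{0,\ldots,p-1\}^r\subset\Z^r$, which are pairwise distinct modulo $p\Z^r$. Thus $(k_0,v)$ satisfies the hypothesis of Theorem \ref{thm:main2}, and applying that theorem to the finite extension $k/k_0$ yields ``$G$ is special $\iff G$ is $k$-split'', completing the proof.

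The main obstacle I anticipate is purely verification-oriented: confirming the monomial valuation is a well-defined Krull valuation on $k_0$ (routine but requires checking that lex-ordering interacts correctly with addition) and that Theorem \ref{thm:main2} applies unchanged to a valuation whose value group is $\Z^r$ rather than $\Z$, since the statement imposes no rank restriction.
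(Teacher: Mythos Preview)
Your proof is correct and follows essentially the same approach as the paper: both construct the lexicographic monomial valuation on the purely transcendental subfield $k_0=F(t_1,\ldots,t_r)$ (this is exactly Lemma~\ref{lem:extension1} applied with the trivial valuation on the perfect field $F$), verify that the monomials $t^\alpha$ with $0\le\alpha_i<p$ form a $k_0^p$-valuation basis, and then invoke Theorem~\ref{thm:main2} for the finite extension $k/k_0$. The paper packages this via the intermediate notion of a ``geometric field'' (Corollary~\ref{cor:geometric}), which also accommodates iterated Laurent series fields, but for finitely generated fields the argument reduces to precisely what you wrote.
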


To prove Theorem \ref{thm:main2} and Corollary \ref{cor:main3}, we need some results concerning the images of additive maps over valued fields. These results are presented in Section 5. 

We do not know whether Theorem \ref{thm:main2} is still true over an \emph{arbitrary} field $k$. 
\begin{question}
\label{question:ed0}
 Let $k$ be a field, $G$ a smooth unipotent algebraic $k$-group. Is this true that $\ed_k(G)=0$ if and only if $G$ is $k$-split? Equivalently, is this true that $G$ is special if and only if $k$-split?
\end{question}

\noindent {\bf Acknowledgements:} We would like to give our sincere thanks to H\'el\`ene Esnault for her support and constant encouragement. We would like to thank \NQT for his interest in the paper.

\section{A technical result}
For a smooth algebraic group over a field $k$, the flat cohomology $H^1_{\rm fppf}(K,G)$ is the same as the Galois cohomology $H^1(K,G)$ for any field extension $K/k$. We need the following lemma.
\begin{lem}
\label{lem:surj}
 Let $k$ ba a field. $G$ a smooth affine algebraic $k$-group. Let $U$ be a normal unipotent $k$-subgroup of $G$. Then the natural map 
$$\varphi: H^1(k,G)\to H^1(k,U)$$
is surjective.

Furthermore, if in addition that $U$ is $k$-split then $\varphi$ is a functorial bijection.
\end{lem}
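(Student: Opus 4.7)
The plan is to handle the natural map $\varphi\colon H^1(k,G)\to H^1(k,G/U)$ induced by the quotient $G\twoheadrightarrow G/U$, and to prove both assertions by reducing to the commutative case treated in \cite[Lemma~3.4]{TV}. The key new ingredient over \cite{TV} is that, since $U$ is now assumed smooth, one can use Serre's non-abelian cocycle formalism \cite{Se2} to dispense with the commutativity hypothesis imposed there.

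For surjectivity I would use the derived series
\[ U=U^{(0)}\supset U^{(1)}=[U,U]\supset\cdots\supset U^{(m)}=1, \]
all of whose terms are smooth characteristic subgroups of $U$ and hence normal in $G$ (since conjugation by $G$ preserves $U$ together with all of its characteristic subgroups), and whose successive quotients $U^{(i)}/U^{(i+1)}$ are commutative smooth unipotent. Writing $\varphi$ as the composition
\[ H^1(k,G)=H^1(k,G/U^{(m)})\to H^1(k,G/U^{(m-1)})\to\cdots\to H^1(k,G/U^{(0)})=H^1(k,G/U), \]
each individual arrow is surjective by \cite[Lemma~3.4]{TV} applied to the commutative normal subgroup $U^{(i)}/U^{(i+1)}\lhd G/U^{(i+1)}$, so that surjectivity of $\varphi$ follows by composition.

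For the bijection in the $k$-split case, I would first deduce $H^1(k,U)=0$ by induction along a composition series of $U$ with successive quotients $\G_a$, using additive Hilbert~90 ($H^1(k,\G_a)=0$) and exactness of the resulting sequences of pointed sets. The standard twisting technique of \cite[I.5.5]{Se2} then yields injectivity of $\varphi$: for any $y\in H^1(k,G)$ mapping to $\bar y\in H^1(k,G/U)$, the fibre $\varphi^{-1}(\bar y)$ is canonically identified with the fibre over the base point of the analogous map for a suitable inner twist, in which $U$ is replaced by a twisted form ${}^yU$; since the conjugation action of $G$ preserves the $\G_a$-filtration of $U$ up to isomorphism of the successive quotients, ${}^yU$ is still $k$-split, so $H^1(k,{}^yU)=0$ and the fibre is a singleton. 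Functoriality in the ground field is built into the construction. The main obstacle I anticipate is precisely this stability of $k$-splitness under inner twisting, which is the crucial non-commutative replacement for the commutativity arguments of \cite{TV}.
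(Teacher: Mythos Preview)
The paper does not prove this lemma; it simply cites \cite[Ch.~IV, 2.2, Remark~3]{Oe} for surjectivity and \cite[Lemma~7.3]{GM} for the bijection. You are supplying an argument where the paper supplies none, and your overall shape---d\'evissage along the derived series to reduce surjectivity to a commutative kernel, then Serre's twisting for injectivity---is sound and is essentially what those cited references do.

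Two points need correction, however. First, your appeal to \cite[Lemma~3.4]{TV} is misplaced: by the paper's own description (the paragraph preceding Proposition~\ref{prop:tech}), that lemma is the commutative-kernel analogue of Proposition~\ref{prop:tech}, i.e.\ an essential-dimension inequality, not a surjectivity statement for $H^1$. You appear to be conflating Lemma~\ref{lem:surj} with Proposition~\ref{prop:tech}. What each step of your d\'evissage actually requires is that for $A$ commutative smooth unipotent and normal in $B$, the obstruction to lifting a class $c\in H^1(k,B/A)$ lies in $H^2(k,{}_cA)$, and this vanishes because ${}_cA$ is again commutative smooth unipotent (ultimately since $H^i(k,\G_a)=0$ for $i\ge1$); that is exactly the content of the Oesterl\'e reference. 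Second, your justification that ${}^yU$ remains $k$-split---that conjugation ``preserves the $\G_a$-filtration up to isomorphism of the successive quotients''---does not stand as written: a given $\G_a$-filtration of $U$ need not be $G$-stable, and even for a stable one you have not explained why the twisted quotients stay isomorphic to $\G_a$. The clean argument is that ${}^yU$ becomes isomorphic to $U$ over $k_s$, hence is $k_s$-split, and since the formation of the split part commutes with separable extensions (\cite[Ch.~V, 7]{Oe}, as recalled in the introduction), $k_s$-split implies $k$-split. This resolves in one line the ``main obstacle'' you correctly anticipated.
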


\begin{proof}
See \cite[Chapter IV, 2.2, Remark 3]{Oe} for the first statement.

See \cite[Lemma 7.3]{GM} for the second statement.
\end{proof}

We have following key technical result, which is motivated by \cite[Lemma 3.4]{TV}.
 \begin{prop}
\label{prop:tech}
 Let $k$ be a field and consider an exact sequence of smooth affine algebraic $k$-groups
$$1\to A \to B \to C \to 1,$$
where $A$ is a unipotent normal subgroup of $B$. Let $K/k$ be a field extension and $x$ an element in $H^1(K,B)$. Then there exists a subfield extension $k\subset E \subset K$ and a twisted form $\tilde{A}$ of $A_E=A \times_{k}E$, $\tilde{A}$  is defined over $E$, such that 
\[\ed(x)\leq \ed_k(C)+ \ed_E(\tilde{A}).\]
Further, if $A$ is central in $B$ then one can choose $\tilde{A}=A_E$ and in particular
\[\ed_k(B)\leq \ed_k(C)+\ed_k(A).\]
\end{prop}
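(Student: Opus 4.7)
The plan is to adapt the strategy of \cite[Lemma 3.4]{TV}, replacing the commutativity hypothesis on $A$ by the use of twisted forms in non-abelian Galois cohomology. Since $A$, $B$, $C$ are smooth, their flat $H^1$'s coincide with Galois $H^1$'s, so we may work throughout with $1$-cocycles valued in $B(\bar K)$ as in \cite{Se2}.

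Let $y \in H^1(K, C)$ be the image of $x$. By the definition of $\ed_k(C)$ there is an intermediate field $k \subset E \subset K$ with $\trdeg_k(E) \leq \ed_k(C)$ and a class $y_0 \in H^1(E, C)$ restricting to $y$. Applying Lemma \ref{lem:surj} to the pair $(B, A)$ over $E$ gives a surjection $H^1(E, B) \surj H^1(E, C)$, so we may lift $y_0$ to some $x_0 \in H^1(E, B)$, represented by a $1$-cocycle $b_0$. Using the conjugation action of $B$ on the normal subgroup $A$, we twist $A$, $B$, $C$ by $b_0$ to produce a short exact sequence
\[
1 \to \tilde A \to \tilde B \to \tilde C \to 1
\]
of smooth algebraic $E$-groups, in which $\tilde A := {}_{b_0}A$ is a form of $A_E$.

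By the twisting dictionary \cite[Chapter I, \S 5.3]{Se2}, base-changed to $K$, there is a bijection of pointed sets $\tau_K : H^1(K, \tilde B) \to H^1(K, B)$ sending the trivial class to the image of $x_0$. Let $x' \in H^1(K, \tilde B)$ be the preimage of $x$. Since $x$ and the image of $x_0$ both lie over $y \in H^1(K, C)$, the image of $x'$ in $H^1(K, \tilde C)$ is trivial, hence by the exactness of the associated sequence of pointed sets \cite[Chapter I, \S 5.5, Proposition 38]{Se2} there exists $a' \in H^1(K, \tilde A)$ whose image in $H^1(K, \tilde B)$ is $x'$. By the definition of $\ed_E(\tilde A)$, the class $a'$ descends to some $a'_0 \in H^1(F, \tilde A)$ with $E \subset F \subset K$ and $\trdeg_E(F) \leq \ed_E(\tilde A)$. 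Pushing $a'_0$ into $H^1(F, \tilde B)$ and then applying $\tau_F$ (which is defined over $F$ since $b_0$ is already defined over $E \subset F$) produces a class $\xi_0 \in H^1(F, B)$, and by naturality of the twisting dictionary under the inclusion $F \subset K$, $\xi_0$ restricts to $x$. Therefore
\[
\ed(x) \leq \trdeg_k(F) \leq \trdeg_k(E) + \trdeg_E(F) \leq \ed_k(C) + \ed_E(\tilde A).
\]
If $A$ is central in $B$, the conjugation action of $B$ on $A$ is trivial, so twisting leaves $A_E$ unchanged, $\tilde A = A_E$, and $\ed_E(\tilde A) \leq \ed_k(A)$; taking the supremum over $x \in H^1(K, B)$ yields $\ed_k(B) \leq \ed_k(C) + \ed_k(A)$.

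The main obstacle is the non-abelian twisting bookkeeping: one has to verify that twisting the original exact sequence by $b_0$ yields an exact sequence of smooth $E$-groups, that the twisting bijections are compatible with base change $E \subset F \subset K$, and that the exactness-of-pointed-sets argument producing $a'$ survives without $A$ being abelian. The smoothness of all groups involved is precisely what licenses the passage to $1$-cocycles with values in the noncommutative group $B(\bar K)$ and thus allows the whole argument of \cite{TV} to go through without the commutativity hypothesis on $A$.
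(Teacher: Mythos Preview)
Your proof is correct and follows essentially the same route as the paper's: descend the image in $H^1(K,C)$ to a small field $E$, lift there via Lemma~\ref{lem:surj}, twist the whole exact sequence by a representing cocycle, use the twisting bijection to reduce $x$ to a class coming from $H^1(K,\tilde A)$, and then descend that class over $E$. The paper writes out the commutative-diagram chase more explicitly, but the argument, the key inputs (Lemma~\ref{lem:surj} and \cite[I, \S5.3]{Se2}), and the handling of the central case are the same.
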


\begin{proof} 
Denote by $g_{-}:H^1(-,B)\to H^1(-,C)$ the natural morphism of functors induced from $B\to C$. Set $y=g_K(x)\in H^1(K,C)$, then there exists a subfield extension $k\subset E \subset K$  and $z$ in $H^1(E,C)$ such that ${\rm trdeg}(E:k) = \ed(y) \leq \ed_k(C)$ and the image of $z$ via $H^1(E,C)\to H^1(K,C)$ is equal to $y$. Since the natural map 
$g_E: H^1(E,B)\to H^1(E,C)$ 
is surjective, there exists $t$ in $H^1(E,B)$ such that $g_E(t)=z$. Let $b$ be a cocycle in $Z^1(E,B)$ representing $t$ and let $c$ be the image of $b$ in $Z^1(E,C)$. Denote by $_bA$, $_bB$ and $_cC$ the groups obtaining  by twisting $A$, $B$ and $C$ (more precisely, by twisting $A_E$, $B_E$ and $C_E$) using the cocycles $b$, $b$ and $c$ respectively. Then we get the following exact sequence of $E$-groups
\[1\to \,_bA \to \,_bB \to \,_cC \to 1\]
by twisting the initial sequence. 

Recall  that there is a functorial bijection between $H^1(L,\,_bH)$ and $H^1(L,\,_b H)$ for any $k$-group $H$, 1-cocycle $b:\Gal(k_s/k)\to H(k_s)$, and field extension $L/k$ (see \cite[I, 5.3, Proposition 35]{Se2}). Thus in the following commutative diagram, the maps $p, q, p^\prime, q^\prime$ are all bijective

{\tiny
\[
\xymatrix{
&&& H^1(E,B)  \ar@{>}^(.6){g_E}[rr]\ar@{>}'[d]_(0.6){\beta}[dd]
& & H^1(E,C) \ar@{>}_(.3){\gamma}[dd]
\\
&& H^1(E,\,_bB) \ar@{>}^p[ur] \ar@{>}^(0.6){g^\prime_E}[rr]\ar@{>}_(.3){\beta^\prime}[dd]
& & H^1(E,\,_cC) \ar@{>}^q[ur] \ar@{>}[dd]_(.3){\gamma^\prime}
\\
& {} \ar@{}'[r][rr] & & H^1(K,B) \ar@{>}'[r]^(1.2){g_K}[rr]
& & H^1(K,C)
\\
H^1(K,\,_bA) \ar@{}[ur] \ar@{>}^{f_K^\prime}[rr] && H^1(K,\,_bB) \ar@{>}^(0.6){g^\prime_K}[rr] \ar@{>}^{p^\prime}[ur]
& & H^1(K,\,_cC) \ar@{>}^{q^\prime}[ur]
}
\]
}
Note that the bottom row in the above diagram is an exact sequence of pointed sets. 

Since we twist by the cocycle representing $t$, we have $t=p(1)$,  where by abuse of notation, $1$ denote the trivial cohomology class. Since $p^\prime$ is bijective, there exists $x^\prime \in H^1(K,\, _bB)$ such that $x=p^\prime(x^\prime)$.
We have
\[
\begin{aligned}
y &=g_K(x)= g_K\circ p^\prime(x^\prime)=q^\prime \circ g^\prime_K (x^\prime) \\
&=\gamma(z)=\gamma(g_E(t))=\gamma\circ g_E \circ p(1)= q^\prime \circ g^\prime_K \circ \beta^\prime (1)=q^\prime (1).
\end{aligned}
\]
Since $q^\prime$ is bijective, $g^\prime_K(x^\prime)=1$. Hence there exists $u^\prime \in H^1(K,\,_bA)$ such that $x^\prime=f^\prime_K(u^\prime)$. By definition of $\ed_E(u^\prime)$, there is a subfield extension $E\subset E^\prime \subset K$ and an element $v^\prime\in H^1(E^\prime,\,_bA)$ such that ${\rm trdeg}(E^\prime:E)\leq \ed_E(\,_bA)$ and $u^\prime$ is the image of $v^\prime$ via $H^1(E^\prime,\,_bA)\to H^1(K,\,_bA)$. (Note that $\,_bA$ is only defined over $E$.)  From the following commutative diagram 

{\tiny

\[
\xymatrix{
& {}  
& & H^1(K,B) 
\\
H^1(K,\,_bA) \ar@{}[ur]\ar@{>}^{f^\prime_K}[rr]
& & H^1(K,\,_bB) \ar@{>}^{p^\prime}[ur]
\\
& {} 
& & H^1(E^\prime,B) \ar@{>}^{\beta_1}[uu]
\\
H^1(E^\prime,\,_bA) \ar@{>}^{f^\prime_{E^\prime}}[rr] \ar^{\alpha_1^\prime}[uu]
& & H^1(E^\prime,\,_cB) \ar@{>}^{\beta_1^\prime}[uu] \ar@{>}^{p^\prime_1}[ur]
},
\]
}
we get
\[
\begin{aligned}
x =p^\prime(x^\prime)=p^\prime \circ f^\prime_K (u^\prime)
= p^\prime \circ f^\prime_K \circ \alpha_1^\prime (v^\prime) =\beta_1\circ p^\prime_1\circ f^\prime_{E^\prime} (v^\prime).  
\end{aligned}
\]
Therefore, $x\in \im(\beta_1)$ and hence 
$$\ed(x)\leq {\rm trdeg}(E^\prime:k)= {\rm trdeg}(E^\prime:E)+ {\rm trdeg}(E:k)\leq \ed_k(C)+ \ed_{E}(\,_bA).$$

The second assertion follows immediately by construction since in the case that $A$ is central, by definition of twisting using a cocycle, we have $_bA=A$ as groups over $E$. 
\end{proof}

\begin{rmk} 
 The twisted forms $\tilde{A}$ appreared in Proposition \ref{prop:tech} are also smooth unipotent algebraic groups.
\end{rmk}
\section{Essential dimension of $p$-groups in characteristic $p$}
In this section, using Proposition \ref{prop:tech}, we derive some corollaries concerning the essential dimension of finite \'etale group schemes of order $p^n$ over a field of characteristic $p>0$, see Proposition \ref{prop:Ledet} and Proposition \ref{prop:JYL}. 

\subsection{Upper bound for finite \'etale unipotent groups}
The following result is obtained already by Ledet \cite{Le} in the case that $G$ is a finite abstract $p$-group, see also \cite[Theorem 1.4]{TV} for a more general result.

\begin{prop}
\label{prop:Ledet}
 Let $k$ be a field of characteristic $p$. Let $G$ be a finite \'etale $k$-group scheme of order $p^n$. Then $\ed_k(G)\leq n$.
\end{prop}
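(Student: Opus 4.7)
The plan is to induct on $n$ using the central case of Proposition~\ref{prop:tech}. The base case $n=0$ is trivial. For the inductive step, I would exploit that $G(k_s)$ is a non-trivial $p$-group, so its center is non-trivial and Galois-stable and descends to a non-trivial central \'etale $k$-subgroup $Z(G) \subset G$; its $p$-torsion $A := Z(G)[p]$ is then a non-trivial central \'etale $k$-subgroup of exponent $p$, say of order $p^m$ with $1 \le m \le n$. Applying the central version of Proposition~\ref{prop:tech} to the extension $1 \to A \to G \to G/A \to 1$ would give
\[\ed_k(G) \le \ed_k(G/A) + \ed_k(A) \le (n-m) + \ed_k(A)\]
by the inductive hypothesis on $G/A$ (finite \'etale of order $p^{n-m}$). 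The problem is thereby reduced to the estimate $\ed_k(A) \le m$ for any finite \'etale commutative $k$-group $A$ of exponent $p$ and order $p^m$.

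To handle that estimate, I would embed $A$ as a closed $k$-subgroup of $\G_a^m$ and use the vanishing $H^1(K, \G_a^m) = 0$. The embedding is obtained by Galois descent: writing $V := A(k_s)$ as an $m$-dimensional $\F_p$-vector space with continuous $\Gal(k_s/k)$-action, the $k_s$-vector space $V \otimes_{\F_p} k_s$ equipped with the diagonal semi-linear Galois action descends, by Hilbert~90 for $\Gal(k_s/k)$, to an $m$-dimensional $k$-vector space $W$; the natural $\Gal$-equivariant $\F_p$-linear injection $V \hookrightarrow V \otimes_{\F_p} k_s \cong W \otimes_k k_s$ then corresponds to a closed embedding $A \hookrightarrow \G_a^m$ of $k$-group schemes (after choosing a $k$-basis of $W$). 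From the long exact sequence attached to $0 \to A \to \G_a^m \to \G_a^m/A \to 0$ together with $H^1(K, \G_a^m) = 0$, every class in $H^1(K, A)$ is represented by a $K$-point of the $m$-dimensional $k$-scheme $\G_a^m/A$, which factors through a closed point of residue field of transcendence degree at most $m$ over $k$. This yields $\ed_k(A) \le m$ and completes the induction.

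The main obstacle is the Galois descent step producing the embedding $A \hookrightarrow \G_a^m$; once this is in place, the remaining ingredients are a direct application of Proposition~\ref{prop:tech} and the standard trick of realizing cohomology classes in $H^1(-,A)$ as points on a quotient variety of the right dimension.
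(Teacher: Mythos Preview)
Your argument is correct and follows essentially the same route as the paper's proof: induct via Proposition~\ref{prop:tech} applied to a central \'etale subgroup, with the anchor being the bound for (twisted) elementary abelian $p$-groups obtained by embedding into a power of $\G_a$ and using additive Hilbert~90. The only cosmetic differences are that the paper peels off a central subgroup of order exactly $p$ at each step (so its anchor is the case $m=1$ of your elementary abelian estimate), and that the image of your $K$-point in $\G_a^m/A$ need not be a \emph{closed} point---but the residue field of the scheme-theoretic image point still has transcendence degree at most $m$, which is all you use.
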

\begin{proof}
 We proceed by induction on $n$. If $n=1$ it is easy to see that $\ed_k(G)=\ed_k(\Z/p)=1$ (for example, see \cite[page 292]{BF}). Now since $G(k^s)$ is a $p$-group, $G$ has a central subgroup $H$ of order $p$. By Proposition \ref{prop:tech}, we get 
\[\ed_k(G)\leq \ed_k(H)+ \ed_k(G/H)=1+ \ed_k(G/H) \leq n,\]
since $\ed_k(G/H)\leq n-1$ by induction assumption.
\end{proof}

\begin{cor}
Let $k$ be a field of characteristic $p>0$. Let 
\[1\to P \to G \to A \to 1 \]
be an exact sequence of finite \'etale $k$-group schemes. Assume that $P$ is a finite \'etale $k$-group scheme of order $p^n$. Then 
\[\ed_k(A)\leq \ed_k(G)\leq \ed_k(A)+n.\]
\end{cor}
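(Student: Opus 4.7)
The plan is to deduce both inequalities from results already established in the paper.

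For the upper bound $\ed_k(G) \leq \ed_k(A) + n$, I would apply Proposition \ref{prop:tech} to the given exact sequence, with the unipotent normal kernel $P$ playing the role of ``$A$'' in the proposition, $G$ the role of ``$B$'', and $A$ the role of ``$C$''. For any field extension $K/k$ and any class $x \in H^1(K,G)$, the proposition then produces an intermediate field $k \subset E \subset K$ and a twisted form $\tilde{P}$ of $P_E$ over $E$ such that
\[\ed(x) \leq \ed_k(A) + \ed_E(\tilde{P}).\]
The only point requiring care is the observation that $\tilde{P}$, being an inner twist of $P_E$, is again a finite \'etale group scheme of order $p^n$; this is automatic, since inner twisting preserves both the order and the property of being finite \'etale. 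Hence Proposition \ref{prop:Ledet} yields $\ed_E(\tilde{P}) \leq n$, and taking the supremum over $x$ gives $\ed_k(G) \leq \ed_k(A) + n$.

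For the lower bound $\ed_k(A) \leq \ed_k(G)$, I would invoke Lemma \ref{lem:surj}: since $P$ is a unipotent normal subgroup of the smooth affine algebraic group $G$, the natural map $H^1(K,G) \to H^1(K,A)$ is surjective for every field extension $K/k$. Given any $y \in H^1(K, A)$, pick a lift $x \in H^1(K, G)$. If $x$ descends to some $x_0 \in H^1(K_0,G)$ for an intermediate field $k \subset K_0 \subset K$, then the image of $x_0$ in $H^1(K_0, A)$ maps to $y$ under the natural map $H^1(K_0,A) \to H^1(K,A)$, so $y$ itself descends to $K_0$. Hence $\ed_k(y) \leq \ed_k(x) \leq \ed_k(G)$, and taking the supremum over $y$ gives the claim. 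I anticipate no genuine difficulty beyond the two short invocations above.
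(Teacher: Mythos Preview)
Your proof is correct and follows essentially the same route as the paper's: the upper bound is obtained exactly as you describe via Proposition~\ref{prop:tech} and Proposition~\ref{prop:Ledet}, and for the lower bound the paper invokes Lemma~\ref{lem:surj} together with \cite[Lemma 1.9]{BF}, whose content is precisely the descent argument you spell out directly.
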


\begin{proof} The first inequality follows from Lemma \ref{lem:surj} and \cite[Lemma 1.9]{BF}.

For the second inequality, let $K/k$ be a field extension and $x$ an element in $H^1(K,G)$. By Proposition \ref{prop:tech}, there is a subfield extension $k\subset E\subset K$ and a twisted form $\tilde{P}$ of $P_E$ such that
\[ \ed(x)\leq \ed_k(A)+\ed_E(\tilde{P}).\]
By Proposition \ref{prop:Ledet}, $\ed_E(\tilde{P})\leq n$ (note that the orders of $\tilde{P}$, of $P_E$ and of $P$ are all equal). 
Therefore, $\ed(x) \leq \ed_k(A)+n$ and hence $\ed_k(G)\leq \ed_k(A)+n$. 
\end{proof}

\begin{rmk}
 Without the assumption of being $p$-group on $P$, it not true, in general, that $\ed_k(G)\geq \ed_k(G/P)$ (see \cite[Theorem 1.5]{MZ}).
\end{rmk}

\subsection{Elementary $p$-groups}
Let $k$ be a field of characteristic $p>0$. Let $G$ be a finite \'etale $k$-group scheme. It is called an {\it elementary $p$-group scheme} (over $k$) if it is of  $p$-power order, commutative and annihilated by $p$. 

\begin{lem}
\label{lem:elementary} 
Let $k$ be a field of characteristic $p>0$, $G$ an elementary finite \'etale $p$-group scheme over $k$. Then $\ed_k(G)$ is always less than or equal 2 and it is less than or equal 1 if $k$ is infinite.
\end{lem}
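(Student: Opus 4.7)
The plan is to combine a generalized Artin--Schreier exact sequence with a compression argument on cocycle representatives, thereby bounding $\ed_k(G)$ uniformly in the order of $G$. First, I would set $V = G(k^s)$, a finite-dimensional $\F_p$-vector space carrying a continuous action of $\Gamma = \Gal(k^s/k)$, and construct the smooth commutative unipotent $k$-group $W$ obtained by Galois descent from $V \otimes_{\F_p} \G_{a,k^s} \cong \G_{a,k^s}^{\dim V}$. Since $W$ is split by the separable closure $k^s$, the structure theory of smooth unipotent groups implies that $W$ is $k$-split, so $H^1(K, W) = 0$ for every field extension $K/k$.

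The Frobenius $F$ on $\G_a$ induces a $k$-endomorphism of $W$ whose kernel over $k^s$ equals $V$. Descending, I obtain a short exact sequence of fppf sheaves
\[
0 \to G \to W \xrightarrow{F-1} W \to 0,
\]
and the associated long exact cohomology sequence yields
\[
H^1(K,G) \cong W(K)/(F-1)W(K)
\]
for every $K/k$. Choosing a $k$-split flag of $W$, I identify $W(K)$ with $K^{\dim V}$ as a set, so a class in $H^1(K,G)$ is represented by a tuple $(a_1,\dots,a_n)\in K^n$ modulo $(F-1)(K^n)$.

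The final and crucial step is a compression: given such a tuple, I must find a subfield $K_0\subset K$ of transcendence degree at most $1$ (respectively $2$) and a representative of the class lying in $K_0^n$. When $k$ is infinite, the plan is to exploit the existence of enough $k$-rational scalars to perform a change of $(F-1)$-representative concentrating the tuple in a single variable; for arbitrary $k$, one additional parameter absorbs the extra $\Gamma$-twisting. This compression is the main obstacle I anticipate: it is the twisted analogue of the elementary fact that a class in $K/\wp(K) = H^1(K,\Z/p)$ is represented by a single element of $K$, and the finite-vs-infinite dichotomy in the statement enters precisely because auxiliary $k$-rational scalars are what allow the reduction from two parameters down to one.
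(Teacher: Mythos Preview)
Your setup through the Lang--Artin--Schreier sequence is correct: the twist $W$ of $\G_a^{\dim V}$ by the $\F_p$-linear Galois action is indeed $k$-split (it is even $k$-isomorphic to $\G_a^{\dim V}$, since the descent datum lands in $\GL_{\dim V}$ and $H^1(k,\GL_n)=1$), and one does get $H^1(K,G)\cong W(K)/(F-1)W(K)$.

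The gap is the compression step, which you flag as ``the main obstacle'' but do not actually carry out. Your presentation realizes a class by a tuple in $K^n$ with $n=\dim_{\F_p}V$, and without further input this only yields $\ed_k(G)\le n$. Your proposed remedy, ``change of $(F-1)$-representative concentrating the tuple in a single variable,'' is not a well-defined operation: modifying by $(F-1)W(K)$ lets you adjust each coordinate by elements of $\wp(K)$ (in suitable coordinates), but there is no mechanism here for collapsing $n$ independent Artin--Schreier parameters into one. What is actually needed is a \emph{different} short exact sequence, namely an embedding $G\hookrightarrow \G_a$ over $k$ so that $H^1(K,G)=K/P(K)$ for a single $p$-polynomial $P$; then $\ed_k(G)\le 1$ is immediate. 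The existence of such an embedding for every smooth commutative $p$-torsion unipotent $k$-group over an \emph{infinite} $k$ is precisely Tits' structure theorem (Proposition~\ref{prop:Tits1}), and that is what the paper invokes via Lemma~\ref{lem:1}. Your compression, if made rigorous, would amount to reproving the zero-dimensional case of that theorem.

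For finite $k$ the paper does not attempt any direct compression at all. Instead it bootstraps: given $a\in H^1(K,G)$ with $\ed(a)\ge 1$, descend $a$ to some $K_0$ of transcendence degree $\ed(a)$; since $K_0$ contains a purely transcendental $k(u)$, and $k(u)$ is infinite, the already-proved bound gives $\ed_{k(u)}(G)\le 1$, hence $\ed(a)\le 1+1=2$. Your sentence ``one additional parameter absorbs the extra $\Gamma$-twisting'' does not correspond to this argument and is not by itself a proof; the extra $+1$ has nothing to do with Galois twisting and everything to do with buying an infinite base field.
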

\begin{proof}
 If $k$ is infinite then by Lemma \ref{lem:1} (in the next section), $\ed_k(G)\leq 1$. 

Assume now that $k$ is finite. Let $K\supset k$ be any field extension of $k$ and $a$ an arbitrary element in $H^1(K,G)$. We show that $\ed(a)$ is always less than or equal 2. 

If $\ed(a)\leq 1$, then $\ed(a)<2$ trivially. 

If $\ed(a)\geq 1$ then there exist a field sub-extension $k\subset K_0 \subset K$ with $\trdeg_k(K_0)=\ed(a)$ and an element $x\in H^1(K_0,G)$ such that $x$ is sent to $a$ via $H^1(K_0,G)\to H^1(K,G)$. Since $\trdeg_k(K_0)=\ed(a)\geq 1$, $K_0$ contains $k(u)$, for some $u$, which is transcendental over $k$. Since $\ed_{k(u)}(G)\leq 1$, there is a subfield extension $k(u)\subset L \subset K_0$ with $\trdeg_{k(u)}(L)\leq 1$ and an element $y\in H^1(L,G)$ which is sent to $x$ via $H^1(L,G)\to H^1(K_0,G)$. Then $y$ is sent to $a$ via $H^1(L,G)\to H^1(K,G$), Therefore
\[\ed(a)\leq \trdeg_k(L)\leq 1+1=2.\]
So $\ed(a)$ is always less than or equal 2. Hence $\ed_k(G)\leq 2$.
\end{proof}

\subsection{Frattini subgroups}
Recall that the {\it Frattini subgroup} $\Phi(G)$ of a abstract finite group $G$ is the intersection of the maximal subgroups of $G$. It is a characteristic subgroup, i.e., it is invariant under every automorphism of $G$ and if $G\not= 1$ then $\Phi(G)\not=G$. If $G$ is $p$-group then $G/\Phi(G)$ is an elementary $p$-group.

To give a finite \'etale $k$-group scheme $G$ is the same as to give a finite abstract group $\G$ with  a continuous action of $\Gal(k_s/k)$ where $\Gal(k_s/k)$ acts as group automorphisms. Since the Frattini subgroup $\H=\Phi(\G)$ of $\G$ is invariant under the action of $\Gal(k_s/k)$, $\H$ with this Galois action defines a finite $k$-subgroup $H$ of $G$, it is also called the Frattini subgroup of $G$. If $G$ is a finite \'etale group scheme of order $p^n$, then $G/H$ is an (finite \'etale) elementary $p$-group scheme over $k$.

We obtain the following result, which is Theorem 8.4.1 in \cite{JLY} when $G$ is an abstract $p$-group.
\begin{prop}
\label{prop:JYL}
Let $k$ be a field of characteristic $p>0$,  $G$  a finite \'etale $k$-group scheme of order power of $p$ and let the order of its Frattini subgroup $\Phi(G)$ be $p^e$. 
\begin{enumerate}
\item If $k$ is infinite then $\ed_k(G)\leq e+1$.
\item If $k$ is finite then $\ed_k(G)\leq e+2$.
\end{enumerate}
\end{prop}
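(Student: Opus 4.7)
The plan is to apply the machinery of Proposition \ref{prop:tech} to the short exact sequence of finite \'etale $k$-group schemes
\[1 \to \Phi(G) \to G \to G/\Phi(G) \to 1,\]
where $\Phi(G)$ is the Frattini subgroup of $G$ as defined just before the statement. The sequence is available because $\Phi(G)$ is a characteristic (hence normal) $k$-subgroup, and it is unipotent: any finite \'etale $p$-group in characteristic $p$ is unipotent, so the hypothesis $A$ unipotent in Proposition \ref{prop:tech} is satisfied.

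Given a field extension $K/k$ and $x \in H^1(K,G)$, Proposition \ref{prop:tech} provides a subfield extension $k \subset E \subset K$ and a twisted form $\tilde{A}$ of $\Phi(G)_E$, itself a finite \'etale $E$-group scheme of the same order $p^e$, such that
\[\ed(x) \leq \ed_k(G/\Phi(G)) + \ed_E(\tilde{A}).\]
By Proposition \ref{prop:Ledet} applied over $E$, we have $\ed_E(\tilde{A}) \leq e$, since $\tilde{A}$ is a finite \'etale $E$-group scheme of order $p^e$.

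It remains to bound $\ed_k(G/\Phi(G))$. Here we use the classical fact, recalled in the paragraph preceding the proposition, that for a $p$-group the quotient $G/\Phi(G)$ is an elementary $p$-group; the same holds on the level of finite \'etale group schemes since $\Phi(G)$ is defined via the Galois-equivariant Frattini of $G(k_s)$. Hence Lemma \ref{lem:elementary} applies and yields $\ed_k(G/\Phi(G)) \leq 1$ when $k$ is infinite and $\ed_k(G/\Phi(G)) \leq 2$ when $k$ is finite. Combining these bounds,
\[\ed(x) \leq \ed_k(G/\Phi(G)) + e \leq \begin{cases} e+1 & \text{if } k \text{ is infinite,} \\ e+2 & \text{if } k \text{ is finite,} \end{cases}\]
and since this holds for every $K/k$ and every $x \in H^1(K,G)$, we obtain the two asserted bounds on $\ed_k(G)$.

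There is no real obstacle once the three ingredients (Proposition \ref{prop:tech}, Proposition \ref{prop:Ledet}, Lemma \ref{lem:elementary}) are in place; the only point requiring a brief justification is that $\Phi(G)$ makes sense as a $k$-subgroup scheme of $G$ and that $G/\Phi(G)$ is elementary, both of which follow from transferring the statements for the finite abstract $p$-group $\mathcal{G} = G(k_s)$ across the equivalence between finite \'etale $k$-group schemes and finite groups with continuous $\Gal(k_s/k)$-action, as already sketched before the statement.
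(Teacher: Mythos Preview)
Your proof is correct and follows essentially the same route as the paper: apply Proposition~\ref{prop:tech} to the exact sequence $1 \to \Phi(G) \to G \to G/\Phi(G) \to 1$, bound the twisted form of $\Phi(G)$ by $e$ via Proposition~\ref{prop:Ledet}, and bound $\ed_k(G/\Phi(G))$ via Lemma~\ref{lem:elementary}. Your extra remarks (that $\Phi(G)$ is unipotent so Proposition~\ref{prop:tech} applies, and that the twisted form retains order $p^e$) are welcome clarifications the paper leaves implicit.
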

\begin{proof}
We have the following exact sequence of finite \'etale $k$-group schemes
\[1\to \Phi(G)\to G\to G/\Phi(G)\to 1,\]
with $N:=G/\Phi(G)$ is an elementary $p$-group. 

Let $K/k$ be a field extension and $x$ an element in $H^1(K,G)$. By Proposition \ref{prop:tech}, there is a subfield extension $k\subset E\subset K$ and a twisted form $\widetilde{\Phi(G)}$ of $\Phi(G)_E$ such that
\[ \ed(x)\leq \ed_k(N)+\ed_E(\widetilde{\Phi(G)}).\]
By Proposition \ref{prop:Ledet}, $\ed_E(\widetilde{\Phi(G)})\leq e$ (note that the order of $\widetilde{\Phi(G)}$ is equal to that of $\Phi(G)$). 
Therefore, $\ed(x) \leq e+ \ed_k(N)$ and hence $\ed_k(G)\leq e+\ed_k(N)$. The corollary now follows from Lemma \ref{lem:elementary}.
\end{proof}

\subsection{Homotopy invariance}
In \cite[Section 8]{BF} they prove the so-called {\it homotopy invariance} of essential dimension, that is $\ed_k(G)=\ed_{k(t)}(G)$, for algebraic groups defined over infinite fields.  In the next proposition, we show that this property does not hold for finite fields. Namely, we have

\begin{prop} Let $k=\mathbb{F}_p$ and $P$ an elementary $p$-group of rank $\geq 3$.
 Then \[\ed_{k(t)}(P)<\ed_k(P).\]
\end{prop}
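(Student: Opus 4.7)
The strategy is to pin down each side of the inequality separately: obtain the upper bound on $\ed_{k(t)}(P)$ directly from the lemma, and exhibit an explicit class realizing the lower bound on $\ed_k(P)$. Concretely, since $k(t)=\F_p(t)$ is infinite, Lemma \ref{lem:elementary} gives $\ed_{k(t)}(P)\le 1$, so it remains to prove $\ed_{\F_p}(P)\ge 2$. Because Lemma \ref{lem:elementary} also yields $\ed_{\F_p}(P)\le 2$ (the finite-field case), this would in fact pin down $\ed_{\F_p}(P)=2$ and $\ed_{\F_p(t)}(P)=1$, from which the strict inequality is immediate.

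To produce a class of essential dimension $\ge 2$ over $\F_p$, I would use the Artin--Schreier identification $H^1(K,(\Z/p)^r)\cong(K/\wp(K))^r$ with $\wp(g)=g^p-g$, valid for any field $K$ of characteristic $p$. Taking $K=\F_p(s,t)$, consider
\[
\alpha:=\bigl([s],\,[t],\,[st],\,0,\ldots,0\bigr)\in (K/\wp(K))^r=H^1(K,P),
\]
which is well-defined because $r\ge 3$. Suppose for contradiction that $\alpha$ descends to some $K_0\subseteq K$ with $\trdeg_{\F_p}(K_0)\le 1$. Then there exist $f_1,f_2,f_3\in K_0$ with $s\equiv f_1$, $t\equiv f_2$, $st\equiv f_3$ modulo $\wp(K)$. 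After replacing $K_0$ by its relative algebraic closure in $K$, I may assume $K_0$ is the function field of a smooth projective curve over a finite field.

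The mechanism I would use to derive a contradiction is the explicit monomial description of $K/\wp(K)$: the relations $\wp(s^a t^b)=s^{pa}t^{pb}-s^at^b$ yield an $\F_p$-basis of $K/\wp(K)$ consisting of the constant class together with the ``reduced'' monomial classes $s^at^b$ for which $(a,b)\neq(0,0)$ and $p\nmid a$ or $p\nmid b$. In this basis the three classes $[s]$, $[t]$, $[st]$ lie in the three visibly independent ``directions'' $(1,0)$, $(0,1)$, $(1,1)$. For purely transcendental $K_0=\F_p(w)$ (the case covered by L\"uroth) one can check directly --- by expanding each candidate $f_i(w)$ as a rational function in $s,t$ and tracking which reduced monomials can appear --- that the image of $\F_p(w)/\wp(\F_p(w))$ in $K/\wp(K)$ cannot simultaneously hit all three of these directions. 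This yields the required contradiction in that case.

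The hardest part, and the main obstacle, is handling an arbitrary trdeg-$1$ subfield $K_0$ --- namely, the function field of a (possibly positive genus) smooth projective curve over $\F_q$. For these I would turn to a ramification-theoretic argument: choose a well-adapted discrete valuation $v$ of $K$, compute the Artin--Schreier (Swan) conductors of $[s]$, $[t]$, $[st]$ at $v$, and show that their aggregate weight cannot fit inside the image of $H^1(K_0,\Z/p)\to H^1(K,\Z/p)$ --- which is constrained by invariants of the curve underlying $K_0$ such as its genus and conductor-discriminant bound. Carrying out this geometric bookkeeping sharply enough to rule out simultaneous descent of three independent Artin--Schreier classes is the delicate step on which the whole argument rests.
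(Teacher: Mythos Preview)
Your upper bound $\ed_{k(t)}(P)\le 1$ agrees with the paper. The issue is the lower bound $\ed_{\F_p}(P)\ge 2$: your plan is incomplete at exactly the point you yourself flag. You have not actually shown that the class $\alpha=([s],[t],[st],0,\dots,0)$ fails to descend to an arbitrary subfield $K_0\subset \F_p(s,t)$ of transcendence degree $\le 1$. The ``purely transcendental'' case via L\"uroth is already nontrivial to pin down, and the general curve case --- where you invoke Swan conductors and a vague conductor--discriminant bound --- is only a sketch of a strategy, not an argument. There is no a priori bound on the genus or the ramification of $K_0\hookrightarrow K$, so it is not clear what inequality would force the contradiction. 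As written, the proof has a genuine gap.

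The paper's argument for $\ed_{\F_p}(P)\ge 2$ is much shorter and avoids all of this. It uses the general fact (see \cite[Lemma~7.2]{BF}) that if a finite constant group $G$ satisfies $\ed_k(G)\le 1$, then $G$ embeds as an abstract group into $\mathrm{PGL}_2(k)$. Applying this with $k=\F_p$: if $\ed_{\F_p}(P)\le 1$, then $P\hookrightarrow \mathrm{PGL}_2(\F_p)$, which is impossible by cardinality since $|P|\ge p^3 > p(p^2-1)=|\mathrm{PGL}_2(\F_p)|$. This one-line counting argument replaces your entire Artin--Schreier and ramification analysis; I would recommend switching to it.
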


\begin{proof}
 We consider $P$ as a constant group scheme over $k$. By Lemma \ref{lem:1},  $\ed_{k(t)}(P)\leq 1$.

On the other hand, $\ed_k(P)\geq 2$. In fact, assume for contradiction that $\ed_k(P)\leq 1$ then $P$ is isomorphic as an abstract group to a subgroup of ${\rm PGL}_2(\mathbb{F}_p)$ (see for example \cite[Lemma 7.2]{BF}). But this cannot happen since
\[{\rm Card}(G)\geq p^3> p(p^2-1)={\rm Card}({\rm PGL}_2(\mathbb{F}_p)).\]
Therefore, $\ed_k(P)>\ed_{k(t)}(P)$.
\end{proof}

\section{Upper bound for essential dimension of unipotent algebraic groups}
In this section we will prove Theorem \ref{thm:main}.

\subsection{Tits' structure theory of unipotent algebraic groups}
We first recall some results of Tits concerning the structure  of  unipotent algebraic groups over an arbitrary  (especially imperfect) field of positive characteristic, see \cite[Chapter V]{Oe} and \cite[Appendix B]{CGP}.
 
Let $G$ be a smooth unipotent algebraic group over a field $k$ of characteristic $p>0$. Then there exists a maximal central smooth connected $k$-subgroup of $G$ which is killed by $p$. This group is called {\it cckp-kernel} of $G$ and denoted by $cckp(G)$ or $\kappa(G)$. Here $\dim (\kappa(G))>0$ if $G$ is not finite. 

The following statements are equivalent:
\begin{enumerate}
\item $G$ is wound over $k$,
\item $\kappa(G)$ is wound over $k$.
\end{enumerate}
If the two equivalences are satisfied then $G/\kappa(G)$ is also wound over $k$ (\cite[Chapter V, 3.2]{Oe};  \cite[Appendix B, B.3]{CGP}).

\begin{prop}[{see \cite[B.3.3]{CGP}}] 
\label{prop:cckp} Let $k$ be a field of characteristic $p>0$. 
Let $G$ be a $k$-wound smooth connected unipotent algebraic $k$-group. Define the ascending chain of smooth connected normal $k$-subgroups $\{ G_i\}_{i\geq 0}$ as follows: $G_0=1$ and $G_{i+1}/G_i$ is the cckp-kernel of the $k$-wound group $G/G_i$ for all $i\geq 0$. These subgroups are stable under $k$-group automorphisms of $G$, their formation commutes with any separable extension of $k$, and $G_i=G$ for sufficiently large $i$.
\end{prop}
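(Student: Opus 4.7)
The plan is to induct on $i$, establishing simultaneously that the recursion makes sense, that each $G_i$ has the claimed naturality properties, and that the chain terminates.

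First, I would verify well-definedness by induction. At step $0$, $G/G_0 = G$ is $k$-wound by hypothesis, so $\kappa(G/G_0)$ is defined and is a smooth connected central $k$-subgroup. Assuming $G_i$ has been constructed as a smooth connected normal $k$-subgroup of $G$ with $G/G_i$ still $k$-wound, the cckp-kernel $\kappa(G/G_i)$ exists, and by the equivalence stated just before the proposition together with the parenthetical fact that $H/\kappa(H)$ is wound whenever $H$ is, the quotient $G/G_{i+1} = (G/G_i)/\kappa(G/G_i)$ is again $k$-wound, so the induction continues. The preimage $G_{i+1}$ of $\kappa(G/G_i)$ in $G$ is smooth connected (extension of a smooth connected group by the smooth connected $G_i$) and normal in $G$ (as $\kappa(G/G_i)$ is central in $G/G_i$).

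Next, I would check the automorphism-stability and separable-extension compatibility, both of which follow from the intrinsic characterization of the cckp-kernel as the \emph{maximal} central smooth connected $k$-subgroup annihilated by $p$. Any $k$-automorphism of a smooth unipotent $k$-group must carry such a maximal subgroup into itself, so $\kappa$ is a characteristic construction. Inductively, if $G_i$ is preserved by all $k$-automorphisms of $G$, then such an automorphism descends to $G/G_i$, preserves $\kappa(G/G_i) = G_{i+1}/G_i$, and hence preserves $G_{i+1}$. For a separable extension $k'/k$, the same maximality together with Galois descent (the maximal central smooth connected subgroup killed by $p$ is uniquely determined and hence descends whenever it is stable under $\Gal(k'_s/k_s)$) gives $\kappa(G)_{k'} = \kappa(G_{k'})$; since woundness also commutes with separable extensions (as cited from \cite[Chapter V]{Oe} and \cite[Appendix B]{CGP}), the induction gives $(G_i)_{k'} = (G_{k'})_i$.

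Finally, for termination, which I regard as the main point of substance, I would argue by a dimension count. If $G_i \neq G$, then $G/G_i$ is a nontrivial smooth connected $k$-group, hence has positive dimension, and in particular is not finite. By the parenthetical statement in the paper that $\dim \kappa(H) > 0$ whenever the smooth connected unipotent group $H$ is not finite, it follows that
\[
\dim G_{i+1} = \dim G_i + \dim \kappa(G/G_i) > \dim G_i.
\]
Since $\dim G$ is finite, the strictly increasing sequence of dimensions must eventually reach $\dim G$; by connectedness and smoothness, this forces $G_i = G$ for all sufficiently large $i$. The only obstacle worth flagging is the claim that woundness descends through the quotient by $\kappa$ and commutes with separable extensions, but both are cited structural facts from Tits' theory rather than things to be reproved here.
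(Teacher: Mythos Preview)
Your sketch is correct and follows the natural inductive route, but note that the paper does not actually prove this proposition: it is stated with a citation to \cite[B.3.3]{CGP} and used as a black box from Tits' structure theory. So there is nothing to compare against on the paper's side.

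One small point worth tightening: your separable-extension step invokes ``Galois descent'' via $\Gal(k'_s/k_s)$, which does not directly cover non-algebraic separable extensions $k'/k$. The correct input, cited in the paper from \cite[Chapter V]{Oe} and \cite[Appendix B]{CGP}, is that both the cckp-kernel construction and the property of being $k$-wound commute with arbitrary separable field extensions; once you take that as given, your induction goes through without needing a descent argument. Everything else---the normality via centrality, the characteristic nature of $\kappa$, and the termination by strict dimension growth---is exactly right.
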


\begin{defn} The smallest natural number $i$ such that $G_i=G$ as in the previous proposition is called {\it the cckp-kernel length} of $G$ and denoted by $l=lcckp(G)$.


Note that $lcckp(G)\leq \dim G$ since the cckp-kernel of a non-trivial smooth connected unipotent algebraic $k$-group is non-trivial.
\end{defn}

\begin{defn}
 Let $k$ be a field of characteristic $p>0$. A polynomial $P\in k[T_1,\ldots,T_r]$ is a $p$-{\it polynomial} if every monomial appearing in $P$ has the form $c_{ij} T_i^{p^j}$ for some $c_{ij}\in k$; that is $P=\sum_{i=1}^r P_i(T_i)$ with $P_i(T_i)=\sum_{j} c_{ij} T_i^{p^j}\in k[T_i]$. 

A $p$-polynomial $P\in k[T_1,\ldots,T_r]$ is called {\it separable} if it contains at least a non-zero monomial of degree 1.

If $P=\sum_{i=1}^r P_i(T_i)$ is a $p$-polynomial over $k$ in $r$ variables, then the {\it principal part} of $P$ is the sum of the leading terms of the $P_i$.
\end{defn}
\begin{prop}[{see \cite[Ch. V, 6.3, Proposition]{Oe} and \cite[Proposition B.1.13]{CGP}}]
\label{prop:Tits1}
 Let $k$ be a infinite field of characteristic $p>0$. Let $G$ be a smooth  unipotent algebraic $k$-group of dimension $n$. Assume that $G$ is commutative and annihilated by $p$. Then $G$ is isomorphic (as a $k$-group) to the zero scheme of a separable nonzero $p$-polynomial over $k$, whose principal part vanishes nowhere over $k^{n+1}\setminus\{0\}$.
\end{prop}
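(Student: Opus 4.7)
The strategy is to realize $G$ as the kernel of a surjective $k$-group homomorphism $P\colon \G_a^{n+1}\surj \G_a$. Since the ring of $k$-group endomorphisms of $\G_a$ is the twisted polynomial ring $k\{F\}$ (with $F$ the Frobenius), any $k$-group morphism $\G_a^{n+1}\to \G_a$ is given by a $p$-polynomial in $n+1$ variables with no constant term; hence exhibiting such a $P$ with $G=\ker(P)$ will immediately realize $G$ as the zero scheme of a $p$-polynomial, and separability of $P$ will correspond to smoothness of $G$ at the identity.

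The first step is to produce a closed $k$-immersion $G\hookrightarrow \G_a^N$ for some $N$. Because $G$ is smooth, commutative, $p$-torsion and unipotent, we have $G_{\bar k}\cong \G_a^n$ as $\bar k$-groups, so the elements of $\Hom_{k\text{-gp}}(G,\G_a)$ separate points on $G$ after base change to $\bar k$. By Galois descent one can then choose finitely many $f_1,\ldots,f_N\in \Hom_{k\text{-gp}}(G,\G_a)$ whose combined map $(f_1,\ldots,f_N)\colon G\hookrightarrow \G_a^N$ is a closed immersion.

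The second step is to cut $N$ down to $n+1$ and arrange that the cokernel of the embedding is isomorphic to $\G_a$ itself, not merely a $k$-form of $\G_a$. If $N>n+1$, I would compose with a $p$-polynomial surjection $\G_a^N\surj \G_a^{N-1}$ whose one-dimensional kernel meets $G$ trivially; over an infinite $k$, the locus of such surjections is a nonempty Zariski-open subset of the relevant parameter variety of $p$-polynomial maps, so $k$-rational choices exist. Iterating reduces to $G\hookrightarrow \G_a^{n+1}$, and a final adjustment of the last projection (again using that $k$ is infinite) arranges for the one-dimensional cokernel to be $\G_a$ rather than a nontrivial form of it, yielding a short exact sequence
\[0\to G\to \G_a^{n+1}\xrightarrow{P}\G_a\to 0.\]
Separability of $P$ then follows from smoothness of $G=\ker(P)$ at the identity, for otherwise $dP_e$ would vanish and $\ker(P)$ would have tangent space of dimension $n+1>n$.

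The principal-part condition is the chief technical obstacle. Writing $P=\sum_{i=1}^{n+1}P_i(T_i)$ with $P_i(T_i)=\sum_j c_{ij}T_i^{p^j}$, one must arrange that the top terms $c_{i,j_i^\ast}T_i^{p^{j_i^\ast}}$ have no common nonzero zero on $k^{n+1}$. To secure this I would precompose $P$ with a generic $k$-linear change of coordinates on $\G_a^{n+1}$; the condition in question carves out a Zariski-open subset of $\mathrm{GL}_{n+1}$ which one verifies to be nonempty by a combinatorial analysis of leading monomials after base change to $\bar k$, and $k$-infiniteness guarantees the existence of a $k$-point in it. Together with the module-theoretic control over $\Hom_{k\text{-gp}}(G,\G_a)$ from the first step, this Zariski-density argument forms the main content of the proof and follows \cite[Ch.~V, 6.3]{Oe} and \cite[B.1.13]{CGP}.
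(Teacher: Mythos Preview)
The paper does not supply its own proof of this proposition: it is stated with attributions to \cite[Ch.~V, 6.3]{Oe} and \cite[Proposition B.1.13]{CGP} and used as a black box. So there is nothing in the paper to compare your argument against beyond those citations.

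Your outline follows the same architecture as those references: embed $G$ as a $k$-subgroup of some $\G_a^N$ (using that $\Hom_{k\text{-gp}}(G,\G_a)$ is a $k\{F\}$-module which separates points), then successively project down to $\G_a^{n+1}$ using the infiniteness of $k$ to avoid the image of $G$, and read off separability of $P$ from smoothness of $\ker(P)$. That much is essentially the standard argument.

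Where your sketch is thin is the principal-part statement. Saying ``precompose with a generic element of $\mathrm{GL}_{n+1}$ and observe the good locus is nonempty open'' is not quite how the cited proofs proceed, and it is not clear that it works as stated: under a $k$-linear substitution the degrees $p^{j_i^\ast}$ themselves jump around, so the principal part is not a $\mathrm{GL}_{n+1}$-equivariant object whose vanishing locus one can track by a single openness argument. In \cite{Oe} and \cite{CGP} this step is handled more carefully, by choosing the projections $\G_a^N\to \G_a^{N-1}$ in the reduction step so that at each stage the principal-part condition is preserved (equivalently, by working with presentations of $\Hom_{k\text{-gp}}(G,\G_a)$ as a $k\{F\}$-module and controlling the leading coefficients directly). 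If you want a self-contained proof rather than a citation, that is the part that needs to be fleshed out.
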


\subsection{Smooth connected unipotent algebraic groups}
In this section we give an upper bound for essential dimension of smooth connected algebraic groups, see Theorem \ref{thm:connected}. 
\begin{lem}
\label{lem:1}
 Let $k$ be an infinite field of characteristic $p>0$. Let $G$ be a smooth unipotent algebraic  $k$-group. Assume that $G$ is commutative and annihilated by $p$. Then $\ed_k(G)\leq 1$.
\end{lem}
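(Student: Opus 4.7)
The plan is to reduce this immediately to an explicit description of $H^1(K,G)$ via Tits' structure theorem (Proposition \ref{prop:Tits1}). Set $n=\dim G$. Since $k$ is infinite and $G$ is smooth, commutative and killed by $p$, Tits' theorem gives an embedding of $G$ into $\G_a^{n+1}$ realizing $G$ as the zero scheme of a separable $p$-polynomial $P\in k[T_1,\dots,T_{n+1}]$ whose principal part vanishes nowhere on $k^{n+1}\setminus\{0\}$. Because $P$ is a $p$-polynomial it is a $k$-group homomorphism $P\colon\G_a^{n+1}\to\G_a$, and because $P$ is separable the linear part of $P$ is nonzero, so the differential of $P$ at the origin is nonzero. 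Hence $P$ is smooth at $0$, and by translation-homogeneity $P$ is smooth of relative dimension $n$ everywhere, so in particular faithfully flat; equivalently, we obtain a short exact sequence of fppf sheaves
\[
0\longrightarrow G \longrightarrow \G_a^{n+1}\stackrel{P}{\longrightarrow}\G_a\longrightarrow 0.
\]

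The next step will be to extract the cohomological description. For any field extension $K/k$, the long exact sequence of fppf cohomology combined with the vanishing $H^1_{\rm fppf}(K,\G_a)=0$ (Hilbert 90 / quasi-coherence of the structure sheaf on $\Spec K$) yields a natural identification
\[
H^1(K,G)\;=\;K\big/P(K^{n+1}),
\]
functorial in $K$. (Recall also that $H^1_{\rm fppf}$ coincides with Galois $H^1$ for the smooth group $G$, as noted in the excerpt.)

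With this in hand, the bound is immediate. Given a class $\alpha\in H^1(K,G)$, choose a representative $a\in K$, and set $K_0:=k(a)\subseteq K$. Then $\trdeg_k(K_0)\leq 1$, and under the restriction map $H^1(K_0,G)=K_0/P(K_0^{n+1})\to K/P(K^{n+1})=H^1(K,G)$ the class of $a$ in the source maps to $\alpha$. Hence $\ed(\alpha)\leq 1$, and since $\alpha$ and $K$ were arbitrary, $\ed_k(G)\leq 1$.

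There is no real obstacle here beyond invoking Tits' theorem and the vanishing of $H^1_{\rm fppf}(K,\G_a)$; the only small point requiring care is the surjectivity of $P$ as a map of fppf sheaves, which is why we spell out why separability of $P$ forces $P$ to be smooth, hence faithfully flat, on all of $\G_a^{n+1}$.
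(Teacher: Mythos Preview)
Your argument is correct and follows exactly the same route as the paper: invoke Tits' Proposition~\ref{prop:Tits1} to present $G$ via a separable $p$-polynomial, use the resulting short exact sequence together with $H^1(K,\G_a)=0$ to identify $H^1(K,G)$ with $K/P(K^{n+1})$, and observe that any class descends to $k(a)$ for a representative $a$. The paper's proof is more terse (it does not spell out why $P$ is surjective or how exactly the bound $\ed_k(G)\le 1$ follows from the cohomology description), but the content is identical.
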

\begin{proof} By a result of Tits (see Proposition \ref{prop:Tits1}), $G$ is isomorphic (as a $k$-group) to the zero scheme of a separable nonzero $p$-polynomial $f(T_1,\ldots,T_n)$, where $n=\dim G+1$, over $k$. That means we have the following exact sequence of $k$-groups
\[0\to G\to \G_a^n \stackrel{f}{\to}\G_a\to 0.  \]
 This follows that $H^1(K,G)=K/f(K)$ for any field extension $K/k$ and hence $\ed_k(G)\leq 1$. 
\end{proof}

\begin{thm} 
\label{thm:connected}
Let $G$ be a smooth connected algebraic unipotent group over a field $k$ of characteristic $p>0$, $G_s$ the $k$-split part of $G$. Let $l$ be the cckp-kernel length of  $G/G_s$. Then $\ed_k(G)\leq l$.
\end{thm}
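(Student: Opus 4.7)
The plan is to reduce to the $k$-wound quotient $H:=G/G_s$ and then induct on its cckp-kernel length $l$, at each step peeling off the central cckp-kernel and applying Proposition~\ref{prop:tech} together with Lemma~\ref{lem:1}.

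First I would invoke the second assertion of Lemma~\ref{lem:surj} applied to the $k$-split normal subgroup $G_s$: for every field extension $K/k$ the induced map $H^1(K,G)\to H^1(K,H)$ is a functorial bijection, so $\ed_k(G)=\ed_k(H)$, and it suffices to show $\ed_k(H)\leq l$. The case $k$ finite may be dismissed at once: then $k$ is perfect, every smooth connected unipotent $k$-group is $k$-split, whence $G_s=G$, $H=1$, $l=0$, and the inequality is vacuous. Henceforth assume $k$ infinite.

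Now I would induct on $l=lcckp(H)$. The case $l=0$ forces $H=1$. For $l\geq 1$, set $H_1:=\kappa(H)$; by definition this is a smooth connected central $k$-subgroup of $H$ which is commutative and annihilated by $p$, and by the equivalences recorded just before Proposition~\ref{prop:cckp} the quotient $H/H_1$ is again smooth connected and $k$-wound, with cckp-filtration $\{H_{i+1}/H_1\}_{i\geq 0}$ of length $l-1$. Applying the central case of Proposition~\ref{prop:tech} to the exact sequence
\[
1\to H_1\to H\to H/H_1\to 1
\]
yields
\[
\ed_k(H)\leq \ed_k(H/H_1)+\ed_k(H_1).
\]
The inductive hypothesis gives $\ed_k(H/H_1)\leq l-1$, and Lemma~\ref{lem:1} (whose hypothesis that $k$ is infinite is now in force, applied to the smooth commutative $p$-torsion unipotent group $H_1$) gives $\ed_k(H_1)\leq 1$. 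Summing, $\ed_k(H)\leq l$, closing the induction.

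The only real obstacle is the combinatorial bookkeeping at the inductive step: one must verify that the cckp-kernel of $H/H_1$ is precisely $H_2/H_1$, and more generally that the Tits filtration of $H$ passes to the quotient by $H_1$ with a shift, so that $lcckp(H/H_1)=l-1$. This follows immediately from the recursive definition $H_{i+1}/H_i=\kappa(H/H_i)$ in Proposition~\ref{prop:cckp} but warrants an explicit unwinding. All other ingredients---Lemma~\ref{lem:surj}, the central case of Proposition~\ref{prop:tech}, and Lemma~\ref{lem:1}---apply directly without further work.
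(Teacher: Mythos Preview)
Your proof is correct and follows essentially the same approach as the paper: reduce to the $k$-wound quotient $H=G/G_s$ via Lemma~\ref{lem:surj}, then peel off the cckp-kernel filtration one layer at a time using the central case of Proposition~\ref{prop:tech} together with Lemma~\ref{lem:1}. The only cosmetic difference is that the paper writes out the chain of inequalities $\ed_k(H/H_i)\leq \ed_k(H/H_{i+1})+1$ for $i=0,\ldots,l-1$ directly rather than packaging it as an induction on $l$; your inductive step and the paper's iterative step are the same computation.
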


\begin{proof} If $k$ is finite then $G$ is $k$-split and hence $\ed_k(G)=0\leq l$.

Now we assume that $k$ is infinite. 
By Lemma \ref{lem:surj}, the natural map $H^1(K,G)\to H^1(K,G/G_s)$ is a bijection for all field $K\supset k$. Therefore, $\ed_k(G)=\ed(G/G_s)$. Set $H=G/G_s$ and let $\{H_i\}_{i\geq 0}$ be the ascending chain of normal subgroups of $H$ as in Proposition \ref{prop:cckp} with $l=lcckp(H)$.

Since $H_{i+1}/H_i$ is the cckp-kernel of $H/H_i$, in particular, it is commutative and killed by $p$. Therefore, by Lemma \ref{lem:1}, $\ed_k(H_{i+1}/H_i)\leq 1$. Applying  Proposition \ref{prop:tech} to the following exact sequence
\[1\to H_{i+1}/H_i \to H/H_i \to H/H_{i+1}\to 1,\]
one has 
\[ \ed_k(H/H_i)\leq \ed_k(H/H_{i+1})+1,\]
for all $i=0,\ldots, l=lcckp(H)$. 
It implies that 
\[\ed_k(H)=\ed_k(H/H_0)\leq \ed_k(H/H_l)+l=l,\]
as required.
\end{proof}
The following result can be considered as a counterpart of Proposition \ref{prop:Ledet} for smooth connected unipotent algebraic groups.
\begin{cor} 
\label{cor:connected}
Let $G$ be a smooth connected unipotent algebraic group over a field $k$ of characteristic $p>0$.  Then $\ed_k(G)\leq \dim G$.
\end{cor}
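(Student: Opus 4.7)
The plan is to reduce the claim to Theorem \ref{thm:connected} and then estimate the cckp-kernel length $l = lcckp(G/G_s)$ by $\dim G$. Since Theorem \ref{thm:connected} already gives $\ed_k(G) \leq l$, it suffices to show $l \leq \dim G$.

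First I would invoke the chain of smooth connected normal $k$-subgroups $\{H_i\}_{i\geq 0}$ of $H := G/G_s$ constructed in Proposition \ref{prop:cckp}, where $H_0 = 1$, $H_{i+1}/H_i$ is the cckp-kernel of $H/H_i$, and $H_l = H$. By definition of the cckp-kernel and the fact (recalled in the paper just before Proposition \ref{prop:cckp}) that the cckp-kernel of a nontrivial smooth connected unipotent $k$-group has positive dimension, each successive quotient $H_{i+1}/H_i$ has $\dim(H_{i+1}/H_i) \geq 1$ for $0 \leq i < l$. Adding these contributions along the chain, one gets
\[
l \leq \sum_{i=0}^{l-1} \dim(H_{i+1}/H_i) = \dim H = \dim(G/G_s) \leq \dim G.
\]

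Combining with Theorem \ref{thm:connected} yields $\ed_k(G) \leq l \leq \dim G$, which is the desired bound. The only small caveat to check is that the hypotheses of Theorem \ref{thm:connected} are met: $G$ is smooth connected unipotent over $k$ of characteristic $p > 0$, so $G_s$ is defined and $G/G_s$ is $k$-wound, and the cckp-kernel length is defined precisely for such wound groups, so the reduction is legitimate. There is no substantive obstacle here; the statement is essentially a direct corollary, the only content being the elementary dimension bookkeeping along the cckp-chain.
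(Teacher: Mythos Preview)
Your proposal is correct and follows essentially the same route as the paper: invoke Theorem~\ref{thm:connected} to get $\ed_k(G)\le l$, then use the inequality $l=lcckp(G/G_s)\le \dim(G/G_s)\le \dim G$. The paper states this last inequality without further comment (it is also recorded right after Definition~4.2), whereas you spell out the dimension count along the cckp-chain; but the argument is the same.
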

\begin{proof} 
Let $G_s$ be the $k$-split part of $G$, $l$ the cckp-kernel of $G/G_s$. By Theorem \ref{thm:connected}, $\ed_k(G)\leq l$. The corollary then follows from the fact that cckp-kernel length $l$ of $G/G_s$ is less than or equal $\dim G/G_s\leq \dim G$.
\end{proof}

\begin{rmk} 
Corollary \ref{cor:connected} can also be proved by induction on $\dim G$ as follows: It is enough to consider the case  $k$ is infinite. Assume that this is the case. If $\dim G=1$, then $G$ is commutative and annihilated by $p$. Thus $\ed_k(G)\leq 1$ by Lemma \ref{lem:1}. Assume that $\dim G>1$. By \cite[Proposition 1]{TT2}, there exists a normal smooth connected $k$-subgroup $H$ of codimension 1 in $G$. Consider the following exact sequence
\[1\to H\to G\to G/H\to 1.\]
Let $K/k$ be a field extension and $x$ an element in $H^1(K,G)$. By Proposition \ref{prop:tech}, there is a subfield extension $k\subset E \subset K$ and a twisted form $\tilde{H}$ of $H_E$ such that 
\[ \ed(x) \leq \ed_k(G/H)+ \ed_E(\tilde{H}). \]
 By induction assumption, one has $\ed_E(\tilde{H})\leq \dim \tilde{H}=\dim H$. Therefore $\ed(x)\leq 1+\dim H=\dim G$ and hence $\ed_k(G)\leq \dim G$.
\end{rmk}


\begin{rmk}
 Fix a natural number $n$, Ledet conjectures that $\ed_k(\Z/p^n\Z) =n$ over any field $k$ of characteristic $p$. However, to the author's knowledge, there are no candidates for smooth connected unipotent algebraic groups and fields with the  essential dimension $n$. We would like to raise the following question.
\begin{question}
 For any natural number $n$, does there exist a field $k$ and a smooth connected unipotent $k$-group $G$ such that $\ed_k(G)=n$?
\end{question}
\end{rmk}

\subsection{Proof of Theorem \ref{thm:main}}
By Lemma \ref{lem:surj}, one has $\ed_k(G)=\ed_k(H)$. 
If $\Char k=0$ then it is well-known that $G$ is $k$-split, i.e., $H=G/G_s$ is trivial. Hence $\ed_k(G)=0$ and the theorem holds trivially.

We now assume that $k$ is of characteristic $p>0$. We consider  the following exact sequence of $k$-groups
\[1\to H^0\to H \to H/H^0 \to 1.\]
Let $K/k$ be a field extension and $x$ an element in $H^1(K,H)$. Then by Proposition \ref{prop:tech}, there is a subfield extension $k\subset E \subset K$ and a twisted form $\tilde{H^0}$ of $H^0_E$ such that 
\[\ed(x) \leq \ed_k(H/H^0)+ \ed_E(\tilde{H^0}).\]
By Corollary \ref{cor:connected}, 
\[\ed_k(\tilde{H^0}) \leq \dim \tilde{H^0} = \dim H^0_E =\dim G/G_s.\]
Hence, we have  the first inequality 
\[ \ed_k(G)\leq \ed_k(H/H^0)+\dim (G/G_s). \]
The second inequality follows immediately from Proposition \ref{prop:Ledet}.
\qed
\section{Images of additive polynomials over valued fields}
In this section, we prove a result concerning the image of an additive polynomial over certain valued field, see Proposition \ref{prop:TT}, which is needed in proving Theorem \ref{thm:main2} in Section 6. 
\subsection{Some lemmas}
\begin{lem} 
\label{lem:val1}
Let $\Gamma$ be a nontrivial totally ordered commutative group 
\begin{enumerate}
\item For any element $\gamma$ in $\Gamma$, there exists $\beta\in \Gamma$ such that $\beta< \gamma$.
\item Let $\gamma_1,\ldots,\gamma_r$ be elements in $\Gamma$ and let $n_1,\ldots,n_r$ be positive numbers. Then there exists an element $\gamma_0$ in $\Gamma$  such that for all elements $\gamma<\gamma_0$, $\gamma\in \Gamma$, we have $n_i\gamma<\gamma_i$ for all $i$.
\end{enumerate}
\end{lem}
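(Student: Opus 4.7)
The plan is to treat the two parts independently; both will follow quickly from the compatibility of the total order with the group law together with the nontriviality of $\Gamma$.

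For (1), since $\Gamma$ is nontrivial I can pick a nonzero $\delta \in \Gamma$, and after replacing $\delta$ by $-\delta$ if necessary I may assume $\delta > 0$. Then $\beta := \gamma - \delta$ satisfies $\beta < \gamma$, using that adding a strictly negative element to both sides of the trivial equality $\gamma = \gamma$ strictly decreases.

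For (2), the key observation I will isolate is that whenever $\gamma < 0$ in $\Gamma$ and $n \geq 1$ is a positive integer, one has $n\gamma \leq \gamma$; this follows by a short induction from the fact that adding a nonpositive element to $\gamma$ produces something $\leq \gamma$. Granting this, the construction of $\gamma_0$ is immediate: using (1) I fix some strictly negative element $-\delta$ of $\Gamma$ and set
\[\gamma_0 := \min(-\delta,\, \gamma_1, \ldots, \gamma_r),\]
which is well defined since the set is finite and $\Gamma$ is totally ordered. Any $\gamma < \gamma_0$ is then strictly negative, so the observation applied to each $n_i$ yields $n_i \gamma \leq \gamma < \gamma_0 \leq \gamma_i$, which is the desired strict inequality for every $i$.

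The only subtlety I need to keep in mind is that a general totally ordered abelian group need not be Archimedean, and in particular it does not admit any form of division: one cannot legally pass from $n_i\gamma < \gamma_i$ to ``$\gamma < \gamma_i/n_i$''. The argument above sidesteps this entirely by first pushing $\gamma$ below the minimum of the $\gamma_i$ (and below $0$) and then invoking only that multiplication by a positive integer makes a negative element smaller. No obstacle beyond this bookkeeping is anticipated.
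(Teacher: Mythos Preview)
Your proof is correct and follows essentially the same approach as the paper: both parts use the compatibility of the order with addition, and in (2) both set $\gamma_0$ to be the minimum of the $\gamma_i$ together with a nonpositive element (the paper uses $0$ where you use $-\delta$) and then exploit that $n\gamma \le \gamma$ for $\gamma<0$. Your treatment of (1) is in fact slightly cleaner than the paper's, which splits into cases on the sign of $\gamma$.
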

\begin{proof}
1) If $\gamma \geq 0$, then let $\beta <0 \leq \gamma$ (such an element exists since $\Gamma$ is nontrivial). 

If $\gamma<0$, one can takes $\beta=2\gamma<\gamma$.  \\
2) We set 
\[ \gamma_0:=\min\{\gamma_1,\ldots,\gamma_r, 0\}.\]
Now let $\gamma$ be an arbitrary element such that $\gamma<\gamma_0$.  Since $\gamma<\gamma_i$, $\gamma<0$, it follows that 
$n_i\gamma<\gamma_i$, for all $i$.
\end{proof}

\begin{lem}
\label{lem:val3}
Let $\Gamma$ be a totally ordered commutative group, $p$ a prime number, $d$ a natural number. Let $\alpha_0,\gamma_0$ be elements in $\Gamma$. Then there exist infinitely many elements $\gamma_i\in \Gamma$ such that 
\[ \gamma_0>\gamma_1>\cdots >\gamma_i>\cdots \]
and $\gamma_i \equiv \alpha_0$ modulo $p^d$ for all $i>0$.
\end{lem}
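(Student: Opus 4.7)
The plan is to produce the required sequence inside the coset $\alpha_0 + p^d\Gamma$ by first locating a threshold below which the rescaled-and-shifted elements lie under $\gamma_0$, and then constructing an infinite strictly decreasing sequence below that threshold using the nontriviality of $\Gamma$ (which is implicit, since otherwise no strictly decreasing sequence exists). The whole argument is a routine application of the two parts of Lemma \ref{lem:val1}, and I do not foresee any genuine obstacle; the only care needed is to keep track of the fact that multiplication by the positive integer $p^d$ preserves the strict inequalities in $\Gamma$.

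More precisely, I would first apply Lemma \ref{lem:val1}(2) to the single element $\gamma_0-\alpha_0\in\Gamma$ and the positive integer $p^d$, producing some $\gamma^{\ast}\in\Gamma$ with the property that every $\gamma<\gamma^{\ast}$ satisfies $p^d\gamma<\gamma_0-\alpha_0$. Next, starting from $\beta_0:=\gamma^{\ast}$, I would iterate Lemma \ref{lem:val1}(1): at each step, given $\beta_i$, pick $\beta_{i+1}<\beta_i$ in $\Gamma$. This produces an infinite strictly decreasing sequence
\[
\gamma^{\ast}=\beta_0>\beta_1>\beta_2>\cdots
\]
in $\Gamma$.

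Finally, I would define $\gamma_i:=\alpha_0+p^d\beta_i$ for $i\geq 1$. By construction each $\gamma_i$ is congruent to $\alpha_0$ modulo $p^d$. Since $\beta_i<\gamma^{\ast}$, the choice of $\gamma^{\ast}$ gives $p^d\beta_i<\gamma_0-\alpha_0$, hence $\gamma_i<\gamma_0$. Similarly, from $\beta_{i+1}<\beta_i$ and $p^d>0$ in the totally ordered group $\Gamma$, one obtains $p^d\beta_{i+1}<p^d\beta_i$, and therefore $\gamma_{i+1}<\gamma_i$. This yields the desired infinite strictly decreasing chain $\gamma_0>\gamma_1>\gamma_2>\cdots$ with each $\gamma_i\equiv\alpha_0\pmod{p^d}$, completing the proof.
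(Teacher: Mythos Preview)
Your proof is correct and follows essentially the same approach as the paper: both arguments use Lemma~\ref{lem:val1} to produce elements of the coset $\alpha_0+p^d\Gamma$ lying below a given bound, and then iterate to obtain the infinite descending chain. The only cosmetic difference is that the paper re-applies Lemma~\ref{lem:val1} at each step (finding $\gamma$ with $p^d\gamma<\gamma_i-\alpha_0$ and setting $\gamma_{i+1}=\alpha_0+p^d\gamma$), whereas you first fix a threshold $\gamma^\ast$ via part~(2), build the whole decreasing sequence of $\beta_i$'s below it via part~(1), and then push everything through $\beta\mapsto \alpha_0+p^d\beta$; the content is the same.
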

\begin{proof}
 By Lemma \ref{lem:val1}, there is $\gamma\in \Gamma$ such that $p^d\gamma<\gamma_0-\alpha$. We set $\gamma_1:=\alpha+p^d\gamma$. Then $\gamma_1<\gamma_0$ and $\gamma_1\equiv \alpha_0$ modulo $p^d$. Continuing this way, one can construct a sequence $\gamma_0>\gamma_1>\gamma_2>\cdots$ satisfies the requirement of the lemma.
\end{proof}

The following lemma is a generalization of \cite[Lemma 4.4.1]{TT1} from discrete valuation to arbitrary valuation. Using some modifications, the proof in \cite{TT1} works well in our case.  Because the proof is quite technical, we would like to give it here in detail for reader's convinence.
\begin{lem}
\label{lem:TT}
Let $k$ be a field of characteristic $p>0$, $v$ a non-trivial valuation of $k$ with the value group $\Gamma$. Let $P=\sum_{i=1}^r \sum_j c_{ij}T_i^{p^j}$ a non-trivial $p$-polynomials in $r$ variables with coefficients in $k$. Let $P_{princ}=\sum_{i=1}^r c_i T_i^{p^{m_i}}$ be the principal part of $P$. Assume that for all $(a_1,\ldots, a_r)\in k\times \cdots \times k$ ($r$ times), 
$v(c_i)+p^{m_i}v(a_i)$ are all distinct whenever they are defined. Then there exists a constant $C_0$ depending only on $P$ such that if $a=P(a_1,\ldots, a_r)$ and $v(a)< C_0$  then $v(a)=v(c_i)+p^{m_i}v(a_i)$, for some $i$.
\end{lem}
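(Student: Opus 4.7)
The plan is to show that when $v(a)$ is sufficiently small, $v(a)$ must equal $v(c_{i_0})+p^{m_{i_0}}v(a_{i_0})$ for the index $i_0$ achieving the smallest value of $v(P_i(a_i))$, via a strict ultrametric argument. This splits naturally into two parts: a per-variable analysis showing that when $v(a_i)$ is small enough the leading monomial of $P_i$ dominates $P_i(a_i)$, and a global argument showing that at most one index contributes the smallest valuation, which is where the distinctness hypothesis enters.

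For the per-variable step, writing $P_i(T_i)=\sum_j c_{ij}T_i^{p^j}$ with leading index $j=m_i$, I would for each $i$ apply Lemma \ref{lem:val1}(2) with $\gamma_j=v(c_{ij})-v(c_i)$ and $n_j=p^{m_i}-p^j>0$ (over those $j<m_i$ with $c_{ij}\neq 0$) to produce $B_i\in\Gamma$, and after replacing $B_i$ by $\min(B_i,0)$ I may assume $B_i\leq 0$. Then $v(a_i)<B_i$ gives $(p^{m_i}-p^j)v(a_i)<v(c_{ij})-v(c_i)$, i.e.\ $v(c_i a_i^{p^{m_i}})<v(c_{ij}a_i^{p^j})$ for every lower-order term, so the non-archimedean property forces $v(P_i(a_i))=v(c_i)+p^{m_i}v(a_i)$. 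Setting $D_i:=\min_{j\,:\,c_{ij}\neq 0}(v(c_{ij})+p^jB_i)$, the fact that $B_i\leq 0$ makes $v(a_i^{p^j})\geq p^jB_i$ whenever $v(a_i)\geq B_i$ (including the case $a_i=0$), hence $v(P_i(a_i))\geq D_i$ in that regime.

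For the global step I would set $C_0:=\min_i D_i$ and assume $v(a)<C_0$. The ultrametric inequality gives some $v(P_i(a_i))<C_0\leq D_i$, which by the previous step forces $v(a_i)<B_i$; hence $S:=\{j:v(a_j)<B_j\}$ is non-empty. Choosing $i_0$ to minimize $v(P_i(a_i))$ over all $i$, one has $i_0\in S$ because $v(P_{i_0}(a_{i_0}))\leq v(a)<C_0\leq D_i$ for every $i$, so $v(P_{i_0}(a_{i_0}))=v(c_{i_0})+p^{m_{i_0}}v(a_{i_0})$. For any $j\neq i_0$ I would then argue $v(P_j(a_j))>v(P_{i_0}(a_{i_0}))$: if $j\in S\setminus\{i_0\}$, both valuations are of principal-part form and are distinct by hypothesis, hence strictly greater at $j$ by minimality of $i_0$; if $j\notin S$, then $v(P_j(a_j))\geq D_j\geq C_0>v(a)\geq v(P_{i_0}(a_{i_0}))$. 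The minimum is therefore uniquely attained, and the strict ultrametric inequality yields $v(a)=v(P_{i_0}(a_{i_0}))=v(c_{i_0})+p^{m_{i_0}}v(a_{i_0})$.

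The main obstacle is ruling out cancellation between different $P_i(a_i)$'s whose valuations might coincide; the distinctness assumption on $v(c_i)+p^{m_i}v(a_i)$ is precisely what prevents this, provided $C_0$ is chosen small enough to force the minimizing index into $S$. A secondary technical point, absent in the discrete-valuation argument of \cite{TT1}, is that $\Gamma$ need not be archimedean or divisible, so the thresholds $B_i$ cannot be defined by a rational inequality and must instead be produced via Lemma \ref{lem:val1}(2).
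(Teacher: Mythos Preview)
Your argument is correct and genuinely different from the paper's. The paper proceeds by induction on $r$: the base case $r=1$ is essentially your per-variable analysis, and the inductive step introduces a cascade of constants $C_3,C_2,C_1,C_0$ together with two auxiliary indices $i_0$ (minimising $v(a_i)$) and $i_1$ (minimising $v(c_i)+p^{m_i}v(a_i)$), and then shows $v(a_{i_1})<C_3$ indirectly via $v(a_{i_0})<C_2$. The induction hypothesis is invoked only to handle the case where some $a_i=0$. By contrast, you avoid induction entirely: your threshold $D_i$ simultaneously absorbs the cases $v(a_i)\geq B_i$ and $a_i=0$, so the set $S$ of ``active'' indices is exactly where the principal part governs, and the distinctness hypothesis then forces a unique minimiser among the $v(P_i(a_i))$. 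This is cleaner and more conceptual; the paper's inductive bookkeeping buys nothing extra here. One small remark: the condition $B_i\leq 0$ is not actually needed for the inequality $p^j v(a_i)\geq p^j B_i$ when $v(a_i)\geq B_i$, since multiplication by a positive integer preserves order in any totally ordered abelian group; it does no harm, but you could drop that normalisation.
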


\begin{proof} 
We process by induction on $r$. First let $r=1$, $P(T)=b_0T+\cdots +b_mT^{p^m}$, $b_m\not=0$. Set $I:=\{i\mid b_i\not=0\}\subset \{0,\ldots,n\}$. By Lemma \ref{lem:val1}, there exists $A \in \Gamma$ such that
\[
 (p^m-p^i)A < v(b_i)-v(b_m), \forall \, i\in I\setminus\{m\}.
\]
We set $$B=\min_{i\in I}\{Ap^i+v(b_i)\},$$
and pick any $C_0$ with $C_0<B$. Now assume that $a=P(a_1)$ ($a_1\in k$) such that $v(a)\leq C_0$. Let $i_0$ be such that 
\[
v(b_{i_0}a_1^{p^{i_0}})=\min_{i\in I} \{v(b_ia_1^{p^i})\}.
\]
Then we have $C_0\geq v(a)=v(P(a_1))\geq v(b_{i_0}a_1^{p^{i_0}})=v(b_{i_0})+p^{i_0}v(a_1).$ Hence by the choices of $C_0$ and  of $B$, one has
\[
p^{i_0}v(a_1)\leq C_0-v(b_{i_0})<B-v(b_{i_0})\leq Ap^{i_0}.
\]
This implies that $v(a_1)<A$ and by the definition of $A$, 
\[
(p^m-p^i)v(a_1)< v(b_i)-v(b_m),\; \forall \, i\in I\setminus\{m\},
\]
or equivalently,
\[
v(b_ia_1^{p^i})=v(b_i)+p^iv(a_1)>v(b_m)+p^mv(a_1)=v(b_ma_1^{p^m}),\;\forall\, i\in I\setminus\{m\}.
\]
Therefore $v(a)=v(b_m)+p^mv(a_1)$ as required. 

Now assume that $r>1$ and that the assertion of the lemma holds true for all integers less than $r$. By induction hypothesis, for any $l$ with $1\leq l <r$, there exist constants $B_l$ (in the value group $\Gamma$) satisfying the lemma for the case $r=l$. Any monomial of $P(T_1,\ldots,T_r)-P_{princ}(T_1,\ldots, T_r)$ is of the form $\lambda T_j^{p^{m_j-s}}$ with $\lambda \in k^\times$, $1\leq j\leq r$, $1\leq s$, and for such a monomial we choose an element $a_{\lambda, s,j}$ in $\Gamma$ such that
\[
(p^{m_j}-p^{m_j-s})a_{\lambda, s.j}<v(\lambda)-v(c_j).
\]
(The existence of such an element is ensured by Lemma \ref{lem:val1}.) Also by Lemma \ref{lem:val1}, we can choose $C_3$ and $C_2$ in $\Gamma$ such that
\[
\begin{aligned}
p^{m_i}C_3 & < v(\lambda)+p^{m_j-s}a_{\lambda,j,s}-v(c_i), \;\forall \, \lambda, j, s;\\
p^{m_j}C_2 & < v(c_i)+ p^{m_i}C_3-v(c_j), \;\forall \, i,j.
\end{aligned}
\]
Let 
\[
\begin{aligned}
 C_1&=\min_{i,j}\{ v(c_{ij})+p^j C_2\},\\
C_0&=\min\{C_1,B_1,\ldots,B_{r-1}\}.
\end{aligned}
\]
Assume that $a=P(a_1,\ldots,a_r)$, $a_i\in k$ and $v(a)< C_0$. If there exists $i$ such that $a_i=0$ then the cardinality of the set $\{i\mid a_i\not=0\}$ is less than $r$ and instead of $P$ we can consider the polynomial
\[
\tilde{P}=P(T_1,\ldots,T_{i-1},0,T_{i+1},\ldots,T_r)
\]
in $r-1$ variables and use the induction hypothesis. So we assume that $a_i\not=0$ for all $i$. Let
\[
i_0=\min_{1\leq i\leq r}\{i \mid v(a_i)\leq v(a_j), \text{ for all $j$}, 1\leq j\leq r\}.
\]
Then 
\[
v(a)=v(P(a_1,\ldots,a_r))\geq \min\{ v(c_{ij}a_i^{p^j})\}\geq \min\{v(c_{ij})+p^j v(a_{i_0})\}.
\]
By assumption $v(a)< C_0\leq C_1$, that implies that, for some $i,j$, one has
\[
v(c_{ij})+p^j v(a_{i_0})<C_1\leq v(c_{ij})+p^j C_2.
\]
Hence $v(a_{i_0})<C_2$. Since $v(c_i)+p^{m_i}v(a_i)$ are pairwise distinct, there exists a unique $i_1$ such that
\[
v(c_{i_1})+p^{m_{i_1}}v(a_{i_1})=\min_{1\leq j\leq r} \{v(c_j)+p^{m_j}v(a_j)\}.
\]
Since
\[ 
v(c_{i_1})+p^{m_{i_1}}v(a_{i_1})\leq v(c_{i_0})+p^{m_{i_0}}v(a_{i_0})< v(c_{i_0})+p^{m_{i_0}}C_2,
\]
one has $v(a_{i_1})<C_3$, since otherwise we would have
\[
v(c_{i_1})+p^{m_{i_1}}v(a_{i_1})\geq v(c_{i_1})+p^{m_{i_1}}C_3\geq v(c_{i_0})+p^{m_{i_0}}C_2
\]
which contradicts the above inequalities. 

Now we show that
\[
v(P(a_1,\ldots,a_r))=v(c_{i_1})+p^{m_{i_1}}v(a_{i_1}).
\]
This follows from  two facts below:

(i) For any monomial $\lambda T_j^{p^{m_j-s}}$ of $P(T_1,\ldots,T_r)-P_{princ}(T_1,\ldots,T_r)$, $\lambda\in k^\times$, $1\leq j\leq r$, $1\leq s$, if $v(a_j)<a_{\lambda,j,s}$ then by the definitions of $a_{\lambda,j,s}$ and of $i_1$ one has
\[ v(\lambda a_j^{p^{m_j-s}})=v(\lambda)+p^{m_j-s}v(a_j)>v(c_j)+p^{m_j}v(a_j)\geq v(c_{i_1})+ p^{m_1}v(a_{i_1}).\]
Also, if $v(a_j)\geq a_{\lambda,j,s}$ then again by definitions of $a_{s,j,s}$ and of $C_s$ one has
\[
v(\lambda a_j^{p^{m_j-s}})\geq v(\lambda)+p^{m_j-s}a_{\lambda,j,s}>v(c_{i_1})+p^{m_{i_1}}C_3> v(c_{i_1})+p^{m_{i_1}}v(a_{i_1}),
\]
since $v(a_{i_1})<C_3$. 

Thus  one always has
\[
v(\lambda a_j^{p^{m_j-s}})> v(c_{i_1})+p^{m_1}v(a_{i_1}).
\]

(ii) For $j\not=i_1$, by the uniqueness of $i_1$ one has
\[
v(c_ja_j^{p^{m_j}})>v(c_{i_{1_1}})+p^{m_{i_1}}v(a_{i_1}).
\]
Hence
\[
v(P_{princ}(a_1,\ldots,a_r))=v\left(c_{i_1}a_{i_1}^{p^{m_{i_1}}}+\sum_{j\not=i_1}c_ja_j^{p^{m_j}}\right)=v(c_{i_1}a_{i_1}^{p^{m_{i_1}}}).
\]
Now (i) and (ii) imply that
\[
 \begin{aligned}
  v(a)&=v(P(a_1,\ldots,a_r)\\
&=v\left( P_{princ}(a_1,\ldots,a_r)+(P(a_1,\ldots,a_r)-P_{princ}(a_1,\ldots,a_r))\right)\\
&=v(c_{i_1}a_{i_1}^{p^{m_{i_1}}})=v(c_{i_1})+p^{m_{i_1}}v(a_{i_1}).
 \end{aligned}
\]
The proof of the lemma is completed.
\end{proof}

\subsection{Valuation basis}
\begin{defn}
\label{defn:valuation basis}
 Let $(k,v)$ be a valued field of characteristic $p>0$, $d$ a natural number. A system $(b_i)_{i\in I}$ of non-zero elements in $k$ is called $k^{p^d}$-{\it valuation independent} 
with respect to (w.r.t)  the valuation $v$   if the values $v(b_i), i\in I$ are all pairwise distinct modulo $p^d$. 

If $V$ a $k^{p^d}$-vector subspace of $k$, this system is called {\it valuation basis} of $V$ if it generates $V$ as $k^{p^d}$-vector space and it is $k^{p^d}$-valuation independent.
\end{defn}
\begin{rmks} (1) Notations being as above. If $(b_i)_{i\in I}$ is $k^{p^d}$-valuation independent then  it is $k^{p^d}$-linearly independent (see the proof of Lemma \ref{lem:independence} (2) below). In particular, a valuation basis of $V$ is a basis of $V$ as $k^{p^d}$-vector space.

 (2) Our definitions of valuation independence and  of valuation basis are slightly different from those in \cite{DK}. A valuation basis in our sense is a valuation basis in their sense.
\end{rmks}

\begin{lem} 
\label{lem:independence}
Let $k$ be a field of characteristic $p>0$, $v$ a non-trivial valuation of $k$. Let $n,d$ be natural numbers. 
\begin{enumerate}
 \item Suppose that there are $n$ elements of $k$ which are $k^p$-valuation independent with respect to $v$.  Then  there are $n^d$ elements which are $k^{p^d}$-valuation independent with respect to $v$.
\item If $k$ has a finite $k^p$-valuation basis with respect to $v$ then $k$ has a finite $k^{p^d}$-valuation basis with respect to $v$.
\end{enumerate}

\end{lem}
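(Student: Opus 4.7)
For part (1), starting from an $n$-tuple of $k^p$-valuation independent elements $e_1,\ldots,e_n$ (so the $\alpha_i:=v(e_i)$ are pairwise distinct in $\Gamma/p\Gamma$), I would form the $n^d$ products
\[ f_{i_1,\ldots,i_d} := e_{i_1}\,e_{i_2}^{p}\,e_{i_3}^{p^2}\cdots e_{i_d}^{p^{d-1}}, \qquad (i_1,\ldots,i_d)\in\{1,\ldots,n\}^d, \]
whose valuations are $\sum_{j=1}^{d} p^{j-1}\alpha_{i_j}$, and argue that these sums are pairwise distinct in $\Gamma/p^d\Gamma$. This I would prove by induction on $d$: reducing an equality in $\Gamma/p^d\Gamma$ further modulo $p\Gamma$ isolates the leading term and pins down $i_1=i_1'$ by the $d=1$ hypothesis; the remaining congruence is then divisible by $p$ on both sides, so, using that $\Gamma$ is torsion-free (hence multiplication by $p$ is injective on $\Gamma$), I can cancel one factor of $p$ to obtain a congruence of the same shape modulo $p^{d-1}\Gamma$ in the remaining indices, to which the induction hypothesis applies.

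For part (2), I would apply the construction of (1) to a finite $k^p$-valuation basis $\{b_1,\ldots,b_n\}$ of $k$. The resulting $n^d$ products $f_{i_1,\ldots,i_d}$ are $k^{p^d}$-valuation independent by part (1), so it remains only to check that they generate $k$ as a $k^{p^d}$-vector space. For this I would iterate the defining property of the basis $d$ times: any $a\in k$ is a $k^p$-linear combination $\sum_i c_i^p b_i$; writing each $c_i=\sum_j d_{ij}^p b_j$ and raising to the $p$-th power, then continuing for $d$ rounds, expresses $a$ as a $k^{p^d}$-linear combination of the $f_{i_1,\ldots,i_d}$'s. Together with valuation independence, this exhibits them as a $k^{p^d}$-valuation basis of $k$.

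The only delicate step is the cancellation by $p$ in the inductive argument for (1), and this is immediate from the torsion-freeness of any totally ordered commutative group. Beyond that I expect no obstacle, since the rest is iterated substitution and routine bookkeeping in $\Gamma$.
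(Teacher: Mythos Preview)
Your proof is correct and follows essentially the same strategy as the paper: the paper also constructs the $n^d$ elements inductively as products $u_{ij}=g_i\,e_j^{\,p^{d-1}}$ (which unwinds to precisely your $f_{i_1,\ldots,i_d}=e_{i_1}e_{i_2}^{p}\cdots e_{i_d}^{p^{d-1}}$) and verifies distinctness of the valuations modulo $p^d$ by the same peel-and-cancel argument, likewise relying on the torsion-freeness of $\Gamma$. The only cosmetic difference is in part~(2): where you establish spanning directly by iterating the $k^p$-basis expansion $d$ times, the paper instead invokes the dimension identity $[k:k^{p^d}]=[k:k^p]^d$ to conclude that $n^d$ linearly independent elements must already be a basis.
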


\begin{proof} (1) We process by induction on $d$. By assumption, the statement (1) is true for $d=1$. 

Now we assume that $d\geq 2$ and that the assertion of (1) is true for $d-1$, i.e., there is a $k^{p^{d-1}}$-basis $g_1,\ldots,g_{n^{d-1}}$  such that $v(g_1),\ldots,v(g_{n^{d-1}})$ are pairwise distinct modulo $p^{d-1}$. 

Let $e_1,\ldots, e_n$ be elements of $k$ such that $v(e_1),\ldots, v(e_n)$ are pairwise distinct modulo $p$. 

For each pair $i,j$ with $1\leq i\leq n^{d-1}, 1\leq j\leq n$ we define $u_{ij}=g_ie_j^{p^{d-1}}$. Then there are $n^d$ such of $u_{ij}$ and these $v(u_{ij})$ are pairwise distinct modulo $p^d$. In fact, if $v(u_{ij})\equiv v(u_{i^\prime j^\prime})$ modulo $p^d$ for two pairs $(i,j)$ and $(i^\prime,j^\prime)$ then 
\[
v(g_i)-v(g_{i^\prime})+ p^{d-1}(v(e_j)-v(e_{j^\prime}))\equiv 0 \mod p^d.
\]
In particular $v(g_i)- v(g_{i^\prime})\equiv 0 \mod p$, hence $i=i^\prime$. This implies that $v(e_j)-v(e_{j^\prime})\equiv 0\mod p$ and $j=j^\prime$. 

(2) We first note that such $u_{ij}$ are $k^{p^d}$-linear independent. In fact, assume that there is a non-trivial $k^{p^d}$-linear combination $\sum a^{p^d}_{ij} u_{ij}=0$. Since all value $v(a^{p^d}_{ij}u_{ij})=p^{d}v(a_{ij})+v(u_{ij})$ are pairwise distinct whenever they are defined, one has 
\[v(0)=v(\sum a^{p^d}_{ij} u_{ij})=p^dv(a_{i_0j_0})+v(u_{i_0j_0}),\] 
for some pair $(i_0,j_0)$, it is impossible. 

Now (2)  follows from the part (1) and the fact that $[k:k^{p^d}]=[k:k^p]^d$ (by induction on $d$).
\end{proof}

\begin{lem}
\label{lem:basis}
Let $k$ be a field of characteristic $p>0$, $v$ a non-trivial valuation of $k$ and let $d$ be a natural number. 
We assume that $k$ has a finite $k^{p^d}$-valuation basis with respect to $v$. Let $V$ be a $k^{p^d}$-vector subspace of $k$. Then $V$ has a (finite) $k^{p^d}$-valuation basis with respect to $v$.
\end{lem}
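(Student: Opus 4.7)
The plan is to induct on $m:=\dim_{k^{p^d}}V$, using the valuation basis $\{e_1,\dots,e_n\}$ of $k$ to define a well-behaved \emph{leading index} on $V\setminus\{0\}$. I first write each $x\in k$ uniquely as $x=\sum_{i=1}^n a_i(x)^{p^d}e_i$ with $a_i(x)\in k$ (Frobenius is injective on $k$, so the $p^d$-th root of each $k^{p^d}$-coordinate is unique). The ultrametric inequality gives $v(x)\geq \min_i\{p^d v(a_i(x))+v(e_i)\}$, and because the $v(e_i)$ are pairwise distinct modulo $p^d$, the numbers $p^d v(a_i(x))+v(e_i)$ (over the $i$ with $a_i(x)\neq 0$) are pairwise distinct, so the minimum is attained at a unique index $i(x)$ and equality holds. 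Hence $i(x)$ is well defined for $x\neq 0$ and records the class $v(x)\bmod p^d=v(e_{i(x)})\bmod p^d$. Since $[k:k^{p^d}]=n$ is finite, $V$ has finite $k^{p^d}$-dimension $m\leq n$, so an induction on $m$ makes sense.

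The base case $m=0$ is vacuous. In the inductive step, choose any $k^{p^d}$-basis $v_1,\dots,v_m$ of $V$ and set $i_0:=i(v_1)$, so that $a_{i_0}(v_1)\neq 0$. Exploiting the characteristic-$p$ identity $(x-y)^{p^d}=x^{p^d}-y^{p^d}$, which yields $a_i(v_j-\mu^{p^d}v_1)=a_i(v_j)-\mu\,a_i(v_1)$ for every $\mu\in k$, I replace each $v_j$ with $j\geq 2$ by
\[ v_j':=v_j-\lambda_j^{p^d}v_1,\qquad \lambda_j:=a_{i_0}(v_j)/a_{i_0}(v_1)\in k, \]
obtaining $a_{i_0}(v_j')=0$ while keeping $v_1,v_2',\dots,v_m'$ a basis of $V$ (the change of basis is unitriangular). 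Let $W:=\mathrm{span}_{k^{p^d}}(v_2',\dots,v_m')$; it has dimension $m-1$, and every $y\in W$ satisfies $a_{i_0}(y)=0$, whence $i(y)\neq i_0$. Applying the inductive hypothesis to $W$ yields a valuation basis $w_2,\dots,w_m$ of $W$.

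Finally I check that $\{v_1,w_2,\dots,w_m\}$ is the desired valuation basis of $V$. Spanning is clear from $V=k^{p^d}v_1\oplus W$. For linear independence, a relation $c_1^{p^d}v_1+\sum_{j\geq 2}c_j^{p^d}w_j=0$ places $c_1^{p^d}v_1$ in $W$, so $a_{i_0}(c_1^{p^d}v_1)=c_1\,a_{i_0}(v_1)=0$, forcing $c_1=0$; the remaining $c_j$ then vanish by independence of the $w_j$. The valuation condition holds because the leading indices $i_0,i(w_2),\dots,i(w_m)$ are pairwise distinct ($i(w_j)\neq i_0$ as $w_j\in W$, and the $i(w_j)$ are mutually distinct since the $v(w_j)$ are distinct modulo $p^d$ by the inductive choice); consequently the residues $v(v_1),v(w_2),\dots,v(w_m)$ modulo $p^d$, which are $v(e_{i_0}),v(e_{i(w_2)}),\dots,v(e_{i(w_m)})\pmod{p^d}$, are pairwise distinct. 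The only real subtlety, and the one place the valuation-basis hypothesis on $\{e_1,\dots,e_n\}$ is used essentially, is the uniqueness of the leading index: without the $v(e_i)$ being distinct modulo $p^d$, several coordinates could tie for the minimum in the expansion of $v(x)$, and the column-reduction step against the fixed coordinate $i_0$ would fail to drop $V$ cleanly to a lower-dimensional subspace on which the induction could be applied.
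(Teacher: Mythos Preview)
Your proof is correct and follows essentially the same route as the paper: both arguments write each vector in terms of the fixed $k^{p^d}$-valuation basis of $k$, identify the unique coordinate realizing the minimum valuation (your ``leading index'' $i(x)$, the paper's $j_1$), and then perform a unitriangular column reduction to kill that coordinate in the remaining basis vectors. The only difference is packaging---you phrase the elimination as an induction on $\dim_{k^{p^d}}V$, while the paper unrolls it iteratively---but the content is identical.
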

\begin{proof}
Let $N=[k:k^{p^d}]$ and $u_1,\ldots,u_{N}$ a $k^{p^d}$-valuation basis of $k$. Let $b_1,\ldots,b_s$ be a $k^{p^d}$-basis of $V$ Then
for each $i$, we can write
\[
b_i=a_{i1}^{p^d}u_1+\cdots+a_{iN}^{p^d}u_N,
\]
where $a_{ij}$ are elements in $k$. Since $v(u_j)$ are pairwise distinct modulo $p^d$, the values  $v(a_{ij}^{p^d}u_j)$ are pairwise distinct. Hence there is a unique index $j_1$ such that $v(b_1)=v(a_{ij_1}^{p^d}u_{j_1})$. In particular $a_{1j_1}\not=0$.

We set $b_1^\prime:=b_1$ and for each $i\geq 2$, we set 
$b_i^\prime:=b_i- (a_{ij_1}/a_{1j_1})^{p^d} b_1$. Then $b_1^\prime,\ldots, b_s^\prime$ form a $k^{p^d}$-basis of $V$. Moreover, terms of the form $\lambda^{p^d} u_{j_1}$ do not appear in $b_2^\prime,\ldots, b_s^\prime$. Similarly, for each $i\geq 2$, we can write

\[
b^\prime_i=(a^\prime_{i1})^{p^d}u_1+\cdots+(a^\prime_{iN})^{p^d}u_N,
\]
where $a^\prime_{ij}$ are elements in $k$. And there is a unique index $j_2$ such that $v(b^\prime_2)=v((a^\prime_{ij_1})^{p^d}u_{j_2})$. We set $b_2^{\prime\prime}:=b_2^\prime$ and
$b_i^{\prime\prime}:=b^\prime_i- (a^\prime_{ij_2}/a^\prime_{2j_2})^{p^d} b^\prime_2$.
 Note that $j_2\not=j_1$ and terms of forms $\lambda_1^{p^d} u_{j_1}$ and of forms $\lambda_2^{p^d} u_{j_2}$ do not appear in $b_3^{\prime\prime},\ldots, b_s^{\prime\prime}$.

Continuing this way by modifying $b_3^{\prime\prime},\ldots,b_s^{\prime\prime}$ and so on, we obtain a $k^{p^d}$-basis $c_1,\ldots,c_s$ of $V$ such that $v(c_1),\ldots,v(c_s)$ are pairwise distinct modulo $p^d$.
\end{proof}

\subsection{A lemma of Dries and Kuhlmann}
The following lemma is a generalization of \cite[Lemma 4]{DK}. They treat the case of local fields, i.e., complete discrete valued fields with finite residue field. With the help of Lemma \ref{lem:basis}, their proof  can be extended to our case.  We include it here for the reader's convenience.
\begin{lem}
 \label{lem:DK}
 
Let $k$ be a field of characteristic $p>0$, $v$ a non-trivial valuation of $k$. We assume that $k$ has a finite $k^p$-valuation basis.  Let $P=f_1(T_1)+\cdots+f_r(T_r)$ be an additive (i.e. $p$-) polynomial with  coefficients in $k$ in $r$ variables, the principal part of which vanishes nowhere over $k^r\setminus\{0\}$. Let $S=\im(P)=f_1(k)+\cdots+f_r(k)$. Let $p^{d_i}=\deg f_i$, $p^d=\max p^{d_i}$, and $s=\sum_{i=1}^r n^{d-d_i}$ where $n:=[k:k^p]$. Then there are $s$ additive polynomials $g_1,\ldots,g_s\in k[X]$ in one variable $X$ such that
\begin{enumerate}
\item  $S=g_1(k)+\cdots+g_s(k)$;
\item all polynomials $g_i$ have the same degree $p^d$;
\item the leading coefficients $b_1,\ldots, b_s$ of $g_1,\ldots, g_s$ are such that $v(b_1),\ldots, v(b_s)$ are distinct modulo $p^d$.
\end{enumerate}
\end{lem}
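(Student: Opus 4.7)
The plan is to first decompose each $f_i(k)$ as a sum of images of additive polynomials all of degree $p^d$, producing exactly $s=\sum_i n^{d-d_i}$ such polynomials whose leading coefficients turn out to be $k^{p^d}$-linearly independent, and then to use Lemma \ref{lem:basis} (together with a matching polynomial-level manipulation) to replace these leading coefficients by a valuation basis of their span, without disturbing the sum of images.

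For the decomposition step, I would apply Lemma \ref{lem:independence}(2) to get, for each $i$, a $k^{p^{d-d_i}}$-valuation basis $\{e_{i,1},\ldots,e_{i,n^{d-d_i}}\}$ of $k$ (this is where the hypothesis that $k$ admits a finite $k^p$-valuation basis enters). Setting
\[
h_{i,\ell}(X):=f_i(e_{i,\ell}X^{p^{d-d_i}}),
\]
one checks by direct expansion of $f_i(T)=\sum_j c_{ij}T^{p^j}$ that $h_{i,\ell}$ is an additive polynomial of degree $p^d$ whose leading coefficient is $b_{i,\ell}=c_i e_{i,\ell}^{p^{d_i}}$, where $c_i=c_{i,d_i}$ is the leading coefficient of $f_i$. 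Writing any $x\in k$ as $x=\sum_\ell a_\ell^{p^{d-d_i}}e_{i,\ell}$ via the basis and using that $f_i$ is additive yields $f_i(k)=\sum_\ell h_{i,\ell}(k)$, whence $S=\sum_{i,\ell}h_{i,\ell}(k)$ with exactly $s$ summands.

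The hypothesis that the principal part vanishes nowhere on $k^r\setminus\{0\}$ amounts to saying that the additive map $(T_1,\ldots,T_r)\mapsto\sum_i c_iT_i^{p^{d_i}}$ from $k^r$ to $k$ is injective; equivalently, the sum $\sum_i c_ik^{p^{d_i}}$ is direct as a sum of $k^{p^d}$-subspaces of $k$. Since Frobenius sends the $k^{p^{d-d_i}}$-basis $\{e_{i,\ell}\}_\ell$ of $k$ to a $k^{p^d}$-basis $\{e_{i,\ell}^{p^{d_i}}\}_\ell$ of $k^{p^{d_i}}$, the family $\{b_{i,\ell}\}$ is a $k^{p^d}$-basis of the $s$-dimensional subspace $U:=\bigoplus_i c_ik^{p^{d_i}}$ of $k$.

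To conclude, I would apply Lemma \ref{lem:basis} to $U$ and inspect its proof: a $k^{p^d}$-valuation basis of $U$ is obtained from $\{b_{i,\ell}\}$ by a finite sequence of elementary operations $b\mapsto b-\alpha^{p^d}b'$ with $\alpha\in k$. The key observation is that each such operation lifts to the polynomial side via $h\mapsto h-h'(\alpha X)$, whose leading coefficient is exactly $b-\alpha^{p^d}b'$ (still nonzero since the transformed $b$'s remain a basis, so the degree stays $p^d$), and which preserves the sum of images because $h'(\alpha x)\in h'(k)$ for every $x\in k$, giving
\[
\bigl(h-h'(\alpha X)\bigr)(k)+h'(k)=h(k)+h'(k).
\]
Iterating these substitutions starting from the $h_{i,\ell}$ produces polynomials $g_1,\ldots,g_s$ of degree $p^d$ whose leading coefficients form a $k^{p^d}$-valuation basis of $U$, hence have valuations pairwise distinct modulo $p^d$, and satisfy $\sum_j g_j(k)=S$. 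The main obstacle is the bookkeeping for this last step, namely verifying that the Gaussian-elimination moves implicit in the proof of Lemma \ref{lem:basis} and the polynomial substitutions above describe the same $k^{p^d}$-linear operation on leading coefficients; once this is matched up, the lemma follows.
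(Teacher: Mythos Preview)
Your proposal is correct and follows essentially the same route as the paper: decompose each $f_i(k)$ via a $k^{p^{d-d_i}}$-basis to get $s$ polynomials $h_{i,\ell}$ of degree $p^d$, use the principal-part hypothesis to show their leading coefficients are $k^{p^d}$-linearly independent, then invoke Lemma~\ref{lem:basis} to pass to a valuation basis. The only difference is cosmetic: the paper carries out the final basis change in one stroke by writing $b_i=\sum_j r_{ij}^{p^d}\tilde c_j$ and setting $g_i(X)=\sum_j \tilde h_j(r_{ij}X)$, then uses invertibility of $(r_{ij})$ to check $S=\sum_i g_i(k)$, whereas you track the elementary operations $b\mapsto b-\alpha^{p^d}b'$ from the proof of Lemma~\ref{lem:basis} one at a time---this is the same linear algebra, just packaged differently.
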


\begin{proof} 
By Lemma \ref{lem:independence}, for each $i$, there are $n^{d-d_i}$ elements $u_{i1},\ldots, u_{i,n^{d-d_i}}$ such that these elements form a $k^{p^{d-d_i}}$-basis of $k$ and $v(u_{i1}),\ldots,v(u_{i,n^{d-d_i}})$ are pairwise distinct modulo $p^{d-d_i}$. In particular, we can write
\[
k=u_{i1}k^{p^{d-d_i}}+\cdots+u_{i,n^{d-d_i}}k^{p^{d-d_i}}.
\]
Hence
\[
f_i(k)=f_i(u_{i1}k^{p^{d-d_i}})+\cdots+f_i(u_{i,n^{d-d_i}}k^{p^{d-d_i}})=h_{i1}(k)+\cdots+h_{i,n^{d-d_i}}(k)
\]
where
\[ h_{ij}(X):=f_i(u_{ij}X^{p^{d-d_i}})\in k[X].\]
And  then
\[ S=\sum_{i=1}^r \sum_{j=1}^{n^{d-d_i}}h_{ij}(k) \]
with all polynomials $h_{ij}$ having degree $p^d$.

We claim that the leading coefficients $c_{ij}=c_i u_{ij}^{p^{d_i}}$ of the polynomials $h_{ij}$ are $k^{p^d}$-linearly independent. In fact, assume that for $a_{ij}\in k$,
\[
0=\sum_{i=1}^r \sum_{j=1}^{n^{d-d_i}} c_{ij}a_{ij}^{p^d}=\sum_{i=1}^r c_i \sum_{j=1}^{n^{d-d_i}}u_{ij}^{p^{d_i}}a_{ij}^{p^d}= 
\sum_{i=1}^r c_i \left(\sum_{j=1}^{n^{d-d_i}}u_{ij}a_{ij}^{p^{d-d_i}}\right)^{p^{d_i}}.
\]
By assumption that the principal part of $P$ vanishes nowhere over $k^r\setminus \{0\}$, one has
\[ 
\sum_{j=1}^{n^{d-d_i}}u_{ij}a_{ij}^{p^{d-d_i}}=0 \; \text{ for } 1\leq i\leq r.
\]
Since $u_{i1},\ldots,u_{i,n^{d-d_i}}$ are $k^{p^{d-d_i}}$-linearly independent, $a_{ij}=0$ for all $i$ and $j$.

We have now found $s=\sum_{i=1}^rn^{d-d_i}$ additive (i.e., $p$-) polynomials $\tilde{h}_1,\ldots, \tilde{h}_s$ in $k[X]$ with $k^{p^d}$-linearly independent leading coefficients $\tilde{c}_1,\ldots,\tilde{c}_s$ and such that $S=\tilde{h_1}(k)+\cdots +\tilde{h}_s(k)$. The Lemma \ref{lem:basis} shows that the $k^{p^d}$-vector space generated by $\tilde{c}_1,\ldots, \tilde{c}_s$ admits a $k^{p^d}$-basis $b_1,\ldots,b_s$, say, for which $v(b_1),\ldots,v(b_s)$ are pairwise distinct modulo $p^d$. Write $b_i=\sum_{j=1}^s r_{ij}^{p^d} \tilde{c}_j$ and we set
\[
g_i(X):= \sum_{j=1}^s \tilde{h}_j(r_{ij}X)
\]
and observe that for each $i$ the polynomial $g_i$ is of degree $p^d$ with leading coefficient $b_i$. 

It only remains to show that the condition (1) is satisfied. Since $S$ is an additive subgroup of $K$ and contains the images $\tilde{h}_j(k)$ for all $j$ it follows that 
\[
g_1(k)+\cdots+g_s(k)\subset \tilde{h}_1+\cdots+\tilde{h}_s(k)=S.
\]
On the other hand, both $\tilde{c}_1,\ldots,\tilde{c}_s$ and $b_1,\ldots,b_s$ are bases, so the matrix $(r_{ij}^{p^d})$ is invertible. Hence, the matrix $(r_{ij})$ is also invertible. Denote its inverse by $(s_{ij})$, with $s_{ij}\in k$. One can check that 
\[
\tilde{h}_i=\sum_{j=1}^s g_j(s_{ij}X).
\]
Hence $S= \tilde{h}_1+\cdots+\tilde{h}_s(k) \subset g_1(k)+\cdots+g_s(k)$, which concludes the proof.

\end{proof}

\subsection{Images of $p$-polynomials} 
\begin{lem}
\label{lem:equality}
 Let $k$ be a field of characteristic $p>0$, $v$ a non-trivial valuation of $k$ with value group $\Gamma$. Assume that $k$ has a finite $k^p$-valuation basis then we have 
\[[k:k^p]=[\Gamma:p\Gamma]=p^m,\]
for some natural number $m$.
\end{lem}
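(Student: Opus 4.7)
The plan is to use a finite $k^p$-valuation basis $b_1,\ldots,b_n$ of $k$ (where $n=[k:k^p]$) to produce explicit representatives of every class in $\Gamma/p\Gamma$. First I would observe that by definition, the valuations $v(b_1),\ldots,v(b_n)$ are pairwise distinct modulo $p$, so they give $n$ distinct classes in $\Gamma/p\Gamma$; this yields the easy inequality $[\Gamma:p\Gamma]\geq n=[k:k^p]$.

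For the reverse inequality, I would show every class of $\Gamma/p\Gamma$ is represented by some $v(b_i)$. Given $\gamma\in\Gamma$, pick $a\in k^{\times}$ with $v(a)=\gamma$ and expand
\[
a=\sum_{i=1}^{n}c_i^{p}\,b_i,\qquad c_i\in k.
\]
The valuations $v(c_i^{p}b_i)=pv(c_i)+v(b_i)$ of the nonzero summands are pairwise distinct modulo $p$ by hypothesis, hence \emph{a fortiori} pairwise distinct in $\Gamma$. The standard ultrametric principle (strict inequality when arguments have strictly distinct valuations) then forces
\[
v(a)=\min_{c_i\neq 0}\bigl(pv(c_i)+v(b_i)\bigr)=pv(c_{i_0})+v(b_{i_0})
\]
for some index $i_0$. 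Thus $\gamma\equiv v(b_{i_0})\pmod{p\Gamma}$, proving $[\Gamma:p\Gamma]\leq n$. Combining the two inequalities gives $[k:k^{p}]=[\Gamma:p\Gamma]=n$.

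It remains to show this common value is a power of $p$. The extension $k/k^{p}$ is purely inseparable of exponent at most $1$, since $x^{p}\in k^{p}$ for every $x\in k$. Any finite purely inseparable extension in characteristic $p$ has degree a power of $p$ (equivalently, a finite $k^p$-basis of $k$ can be refined to a $p$-basis of size $m$, giving $[k:k^p]=p^m$). Hence $n=p^m$ for some $m\geq 0$, and the chain of equalities follows.

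I do not expect any serious obstacle here: the whole argument rests on the single fact that elements whose valuations are distinct modulo $p$ are in particular distinct in $\Gamma$, so the ultrametric inequality becomes an equality on the expansion of $a$ in the $k^p$-valuation basis. The only point that might warrant a brief remark in the writeup is the purely inseparable argument at the end, which is classical.
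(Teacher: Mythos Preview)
Your argument is correct. The easy inequality $[\Gamma:p\Gamma]\ge n$ is exactly the paper's, and your justification that $[k:k^p]$ is a $p$-power via pure inseparability is the standard one (the paper's one-line remark that ``$k/k^p$ is a finite $\F_p$-vector space'' is aiming at the same fact).

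For the reverse inequality the two proofs diverge slightly. The paper argues by lifting: given finitely many coset representatives $\gamma_1,\ldots,\gamma_{N'}$ of $\Gamma/p\Gamma$, choose $b_i\in k$ with $v(b_i)=\gamma_i$; these have valuations distinct modulo $p$, hence are $k^p$-linearly independent (a fact recorded just after Definition~\ref{defn:valuation basis}), so $N'\le [k:k^p]$. You instead show directly that the $v(b_i)$ already hit every coset: expand any $a\in k^\times$ in the $k^p$-valuation basis and use that the nonzero summands have pairwise distinct valuations, so the ultrametric inequality is an equality and $v(a)\equiv v(b_{i_0})\pmod{p\Gamma}$ for some $i_0$. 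Your route is a touch more constructive (it identifies which $v(b_i)$ represents a given class) and is self-contained, while the paper's route is marginally shorter because it leans on the earlier remark that valuation independence implies linear independence. Either way the content is the same elementary observation about distinct residues modulo $p$.
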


\begin{proof}
Let $N=[k:k^p]$. Consider a finite set of elements $\gamma_1,\ldots,\gamma_{N^\prime}$, $\gamma_i\in \Gamma$,  which are representatives of cosets of $p\Gamma$ in $\Gamma$. 
For each $i$, choose an element $b_i\in k$ such that $v(b_i)=\gamma_i$. As $v(b_1),\ldots,v(b_{N^\prime})$ are pairwise distinct modulo $p$, it implies that $b_1,\ldots,b_{N^\prime}$ are $k^p$-linearly independent. In particular $N^\prime\leq N$. Hence $[\Gamma:p\Gamma]$ is finite and  $M:=[\Gamma:p\Gamma]\leq N$.

On the other hand, let $e_1,\ldots, e_N$ a $k^p$-valuation basis of $p$. Since $v(e_1),\ldots, v(e_N)$ are pairwise distinct modulo $p$, we have $N\leq M$.  Therefore $N=M$

Finally, note that $k/k^p$ is a finite $\F_p$-vector space, so $N=p^m$, for some $m$.
\end{proof}

Now we have the following result, which plays an important role in the proof of Theorem \ref{thm:main2} in the last section.

\begin{prop}
 \label{prop:TT}
 Let $k$ be a field of characteristic $p>0$, $v$ a non-trivial valuation of $k$ with value group $\Gamma$. We assume that $k$ has a finite $k^p$-valuation basis and set $p^m:=[k:k^p]$. 
 Let $P$ be a $p$-polynomial  in $r$ variables with coefficients in $k$ satisfying the condition that the principal part $P_{princ}=\sum_{i=1}^r c_i T_i^{p^{m_i}}$, $c_i\in k^*$, vanishes nowhere over $k^r\setminus\{0\}$. Let $d=\max m_i$. Then we have 
\[s:=\sum_{i=1}^r p^{m(d-m_i)}\leq p^{md}.\]
Furthermore, if $s< p^{md}$ then the quotient $k/P(k)$ is infinite.
\end{prop}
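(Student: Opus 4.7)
The approach is to apply Lemma \ref{lem:DK} to reduce to a sum of one-variable polynomials with leading coefficients of valuations pairwise distinct modulo $p^d$, and then to deploy Lemma \ref{lem:TT} together with Lemma \ref{lem:val3} to produce infinitely many inequivalent cosets of $P(k)$ when $s<p^{md}$.

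First I would apply Lemma \ref{lem:DK} to $P$. With $n=[k:k^p]=p^m$ and $d_i=m_i$, this produces additive polynomials $g_1,\ldots,g_s\in k[X]$ in one variable, all of degree $p^d$, whose leading coefficients $b_1,\ldots,b_s$ have valuations pairwise distinct modulo $p^d$, and such that $P(k)=g_1(k)+\cdots+g_s(k)$. The count $s=\sum_{i=1}^r n^{d-d_i}=\sum_{i=1}^r p^{m(d-m_i)}$ matches the one in the statement.

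For the first inequality I would invoke Lemma \ref{lem:equality} together with the fact that a totally ordered abelian group is torsion-free, so multiplication by $p^d$ is injective and $[\Gamma:p^d\Gamma]=[\Gamma:p\Gamma]^d=p^{md}$. Since the $s$ residues $v(b_i)\pmod{p^d}$ are pairwise distinct in $\Gamma/p^d\Gamma$, this immediately yields $s\leq p^{md}$.

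For the second statement consider the $p$-polynomial $Q(X_1,\ldots,X_s)=g_1(X_1)+\cdots+g_s(X_s)$. Its principal part is $\sum_{i=1}^s b_i X_i^{p^d}$, and distinctness of the $v(b_i)$ modulo $p^d$ is exactly the hypothesis required to apply Lemma \ref{lem:TT}: one obtains a constant $C_0\in\Gamma$ such that any $a\in Q(k^s)=P(k)$ with $v(a)<C_0$ satisfies $v(a)\equiv v(b_i)\pmod{p^d}$ for some $i$. Now assume $s<p^{md}$. Then $\Gamma/p^d\Gamma$ has some residue class $\alpha_0$ not hit by any $v(b_i)$; using Lemma \ref{lem:val1} to pick a starting element below $C_0$ and then Lemma \ref{lem:val3}, I would construct an infinite strictly decreasing sequence $C_0>\gamma_1>\gamma_2>\cdots$ in $\Gamma$ with every $\gamma_i\equiv\alpha_0\pmod{p^d}$, and choose $x_i\in k$ with $v(x_i)=\gamma_i$. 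The subtle step, which I take to be the main point of the argument, is to verify that the $x_i$ represent pairwise distinct cosets of $P(k)$, not merely that each $x_i\notin P(k)$: for $i<j$ one has $v(x_i-x_j)=\gamma_j<C_0$ with $\gamma_j\equiv\alpha_0\pmod{p^d}$, so by the conclusion of Lemma \ref{lem:TT} the difference $x_i-x_j$ cannot lie in $P(k)$. Hence $\{x_i\}_{i\geq 1}$ yields infinitely many cosets in $k/P(k)$.
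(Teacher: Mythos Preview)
Your proposal is correct and follows essentially the same approach as the paper: apply Lemma~\ref{lem:DK} to replace $P$ by $Q=g_1+\cdots+g_s$ with leading coefficients having distinct valuations modulo $p^d$, use Lemma~\ref{lem:equality} to bound $s\leq|\Gamma/p^d\Gamma|=p^{md}$, and when $s<p^{md}$ combine Lemma~\ref{lem:TT} with Lemma~\ref{lem:val3} to produce an infinite sequence whose pairwise differences lie outside $\im(Q)=\im(P)$. Your explicit invocation of torsion-freeness to compute $[\Gamma:p^d\Gamma]$ and your emphasis on checking pairwise distinctness of cosets (rather than merely $x_i\notin P(k)$) match the paper's reasoning exactly.
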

\begin{proof}
We write 
\[P(T_1,\ldots,T_r)=f_1(T_1)+\cdots+f_r(T_r),\]
where each $f_i$ is a $p$-polynomial in one variable $T_i$ with coefficients in $k$ and of degree $p^{m_i}$. We set
\[S=\im(P)=f_1(k)+\cdots+f_r(k).\]
 
Choose $g_1,\ldots, g_s$ with leading coefficients $b_1,\ldots,b_s$, for which $v(b_1),\ldots,v(b_s)$ are pairwise distinct modulo $p^d$ as in Lemma \ref{lem:DK}. We set
\[Q(T_1,\ldots,T_s)=g_1(k)+\cdots+g_s(k).\]
Then $S=\im(Q)$.

By Lemma \ref{lem:equality}, $\Gamma/p\Gamma$ is of order $p^m$ and $\Gamma/p^d\Gamma$ is of  order $p^{md}$ by induction on $d$. As $v(b_1),\ldots,v(b_s)$ are pairwise distinct modulo $p^d$, it implies in particular that $s\leq p^{md}$.

Now we assume that $s<p^{md}$. Then there is an element $l\in \Gamma$ such that $v(b_i)\not\equiv l$ mod $p^d$ for all $i\in \{1,\ldots, s\}$. 

Since $v(b_1),\ldots,v(b_s)$ are pairwise distinct modulo $p^d$, for any tuple $(a_1,\ldots,a_s)\in k^\times \times\cdots\times k^\times$ ($s$ times), the values $v(b_i)+p^dv(a_i)$, $1\leq i\leq s$, are pairwise distinct. Then all conditions in Lemma \ref{lem:TT} are satisfied (for the $p$-polynomial $Q$), so there is $C_0$ as in the lemma. 

We claim that for all $a\in k$ with $v(a)\leq C_0$ and $v(a)\equiv l$ modulo $p^d$, $a$ is not in $S=\im Q$. In fact, assume that $a=Q(a_1,\ldots,a_s)$.  By Lemma \ref{lem:TT}, there is an index $i$ such that $v(a)=v(b_i)+p^d v(a_i)$. But this contradicts to the fact that $v(b_i)\not\equiv l$ modulo $p^d$, hence the claim follows.

By Lemma \ref{lem:val3}, we can choose a sequence $(e_i)_i$, $e_i\in k$ for all $i\geq 1$ such that
\[C_0> v(e_1)>v(e_2)>\cdots > v(e_i)>\cdots \]
and $v(e_i)\equiv l$ modulo $p^d$ for all $i$. 
Then $v(e_i-e_{i+j})=v(e_{i+j})\equiv l$ modulo $p^d$ for all $i,j\geq 1$. By the claim above, $e_i-e_{i+j}\not\in \im(Q)=S$ for all $i,j\geq 1$. 
Hence all $e_i$ have distinct images in $k/\im(Q)=k/\im(P)$. Therefore $k/\im(P)$ is infinite as required.
\end{proof}

\section{Unipotent groups of essential dimension $0$}
In this section, we will prove Theorem \ref{thm:main2} and Corollary \ref{cor:main3} stated in the Introduction.

\subsection{Infiniteness of Galois cohomology of unipotent algebraic groups over valued fields}
\begin{prop}
\label{prop:dim<p-1}
Let $k$ be a field of characteristic $p>0$, $v$ a non-trivial valuation of $k$. We assume that k has a finite $k^p$-valuation basis (see Definition \ref{defn:valuation basis}).
 Let $G$ a non-trivial smooth connected unipotent algebraic $k$-group of dimension $<[k:k^p]-1$. If $G$ is not split over $k$ then $H^1(k,G)$ is infinite.
\end{prop}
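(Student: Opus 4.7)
The plan is to reduce the problem to showing that $H^1(k, \bar G)$ is infinite for a well-chosen commutative $p$-torsion wound quotient $\bar G$ of $G$, and then to invoke Proposition \ref{prop:TT}. First, by the bijectivity clause of Lemma \ref{lem:surj}, $H^1(k, G) \cong H^1(k, G/G_s)$; since $G$ is not $k$-split, $G/G_s$ is a non-trivial, smooth, connected, wound unipotent $k$-group of dimension at most $\dim G$. Replacing $G$ by $G/G_s$, I may therefore assume $G$ itself is wound and non-trivial. I then apply the cckp-filtration of Proposition \ref{prop:cckp} to $G$, obtaining $1 = G_0 \subsetneq G_1 \subsetneq \cdots \subsetneq G_l = G$ with $l \geq 1$. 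Set $\bar G := G/G_{l-1}$. By the defining property of the filtration, $\bar G = G_l/G_{l-1}$ coincides with its own cckp-kernel, so it is smooth, connected, commutative, and $p$-torsion; moreover $\bar G$ is wound, because the remarks before Proposition \ref{prop:cckp} imply that every quotient $G/G_i$ of the wound group $G$ is again wound. Lemma \ref{lem:surj} then provides a surjection $H^1(k, G) \twoheadrightarrow H^1(k, \bar G)$, so it suffices to prove $H^1(k, \bar G)$ is infinite.

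Next, I apply the Tits structure theorem (Proposition \ref{prop:Tits1}) to the wound commutative $p$-torsion group $\bar G$: there is an exact sequence
\[0 \to \bar G \to \G_a^r \xrightarrow{P} \G_a \to 0\]
with $r = \dim \bar G + 1$, where $P$ is a separable $p$-polynomial whose principal part $\sum_{i=1}^r c_i T_i^{p^{m_i}}$ (with every $c_i \neq 0$) vanishes nowhere on $k^r \setminus \{0\}$. The associated long exact sequence of Galois cohomology identifies $H^1(k, \bar G) \cong k/P(k)$, and Proposition \ref{prop:TT} will produce the desired infiniteness provided I can verify the strict inequality
\[s := \sum_{i=1}^r p^{m(d-m_i)} < p^{md},\]
where $p^m := [k:k^p]$ and $d := \max_i m_i$.

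Verifying this strict inequality is the main obstacle, since Proposition \ref{prop:TT} only guarantees $s \leq p^{md}$ on its own. The crucial observation is that $r \geq 2$, which follows from $\dim \bar G \geq 1$ (as $\bar G$ is non-trivial and connected). Combined with the anisotropy of the principal part, this forces $m_i \geq 1$ for every $i$: if some $m_{i_0} = 0$ one could choose $(a_j)_{j \neq i_0}$ not all zero in $k$ and set $a_{i_0} := -c_{i_0}^{-1} \sum_{j \neq i_0} c_j a_j^{p^{m_j}}$ to produce a nontrivial zero of the principal part. Hence $d \geq 1$ and each summand satisfies $p^{m(d-m_i)} \leq p^{m(d-1)}$; using the hypothesis $r \leq \dim G + 1 < p^m$ one then obtains
\[s \leq r \cdot p^{m(d-1)} < p^m \cdot p^{m(d-1)} = p^{md},\]
and Proposition \ref{prop:TT} concludes that $k/P(k)$, and hence $H^1(k, G)$, is infinite.
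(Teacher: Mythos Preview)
Your proof is correct and follows the same overall strategy as the paper: reduce via Lemma~\ref{lem:surj} to a non-trivial commutative, $p$-torsion, $k$-wound quotient, present it as the kernel of a separable $p$-polynomial $P$ via Proposition~\ref{prop:Tits1}, and then invoke Proposition~\ref{prop:TT}. The only genuine difference is in how the strict inequality $s<p^{md}$ is established: the paper argues by contradiction, observing that $s=p^{md}$ would force $p^m-1\mid r-1$ (since each $p^{m(d-m_i)}-1$ and $p^{md}-1$ are divisible by $p^m-1$) and hence $r-1\ge p^m-1$; you instead show directly that anisotropy of the principal part together with $r\ge 2$ forces every $m_i\ge 1$, whence $s\le r\,p^{m(d-1)}<p^m\cdot p^{m(d-1)}=p^{md}$. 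Both arguments are elementary, and yours has the minor advantage of isolating the structural fact that a wound commutative $p$-torsion group of positive dimension is cut out by a $p$-polynomial whose principal part has no linear terms.
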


\begin{proof}
Let $G_s$ be the $k$-split part of $G$. Then $G/G_s$ is nontrivial, connected, $k$-wound and $H^1(k,G)=H^1(k,G/G_s)$ by Lemma \ref{lem:surj}. 
So we may assume that $G$ is wound over $k$. By \cite[Chapter V, 3.3]{Oe}, $G$ has a composition series of characteristic subgroups defined and wound over $k$: $G=G_0\supset G_1\supset \cdots \supset G_u={1}$ such that each quotient $G_i/G_{i+1}$ is commutative, $k$-wound and annihilated by $p$. 
Also by Lemma \ref{lem:surj}, we may assume further that $G$ is commutative, wound over $k$ and annihilated by $p$. In this case, $G$ is $k$-isomorphic to a $k$-subgroup of $\G_a^r$, where $r=\dim G +1$, which is the zero set of a separable $p$-polynomial $P(T_1,\ldots,T_r)\in k[T_1,\ldots,T_r]$, whose the principal part $P_{princ}=\sum_{i=1}^r c_i T_i^{p^{m_i}}$ vanishes nowhere over $k^r\setminus \{0\}$, see Proposition \ref{prop:Tits1}. 

By Proposition \ref{prop:TT}, one has
\[s:=\sum_{i=1}^r p^{m(d-m_i)}\leq p^{md},\]
where $d:=\max{m_i}$ and $p^m:=[k:k^p]$. 

Assume that $s=p^{md}$. Then one has 
\[p^{md}-1=s-1=\sum_{i=1}^r (p^{m(d-d_i)}-1) + (r-1).\]
This implies that $r-1=\dim G$ is divisible by $p^m-1$, and hence $\dim G\geq p^m-1$ (note that $G$ is nontrivial and connected, so $\dim G>0$). This contradicts to the assumption that $\dim G<p^m-1$. Therefore, $s<p^{md}$ and by Proposition \ref{prop:TT}, $H^1(k,G)$ is infinite.
\end{proof}

\subsection{Weil restriction}

To prove Theorem \ref{thm:main2}, we also need some basic facts about Weil restriction of linear algebraic groups over fields (equivalently, smooth affine group schemes over fields)  as presented in \cite[Appendices A.2-A.3]{Oe}.

Let $\rho:k\to k^\prime$ be a homomorphism of commutative rings, where $k^\prime$ is a projective $k$-module of finite type. For any affine $k$-scheme $W^\prime$ we then can associate an affine $k$-scheme $\sR_{k^\prime/k} W$ called the Weil restriction of $W$, which satisfies the following universal property: for any $k$-scheme $V$, one has a bijection (functorial in $V$)
\[\Hom_{k-sch}(V,\sR_{k^\prime/k}W)\to \Hom_{k^\prime-sch}(V\times_kk^\prime,W).\]
We refer the reader to the \cite[Chapter 7, 7.6]{BLR} for a more general study of Weil restriction.

\begin{lem}
 \label{lem:Weil restriction}
Let $\rho: k\to k^\prime$ be a finite field extension and $G^\prime$ be a linear algebraic group over $k^\prime$. Let $G=\sR_{k^\prime/k} G^\prime$ be its Weil restriction. The following properties are true.
\begin{enumerate}
 \item $G$ is a linear algebraic group and $H^1(k,G)\simeq H^1(k^\prime,G^\prime)$.
 \item $G$ is connected (resp. unipotent) if and only if  $G^\prime$ is connected (resp. unipotent).
 \item $G$ is unipotent and $k$-wound if and only if  $G^\prime$ is unipotent and $k^\prime$-wound.
 \item $G$ is unipotent and $k$-split if and only if  $G^\prime$ is unipotent and $k^\prime$-split.
\end{enumerate}
\end{lem}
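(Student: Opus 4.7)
The plan is to treat the four assertions in turn, deducing each from the defining adjunction
\[
\Hom_k(V,\sR_{k^\prime/k}G^\prime)\;\simeq\;\Hom_{k^\prime}(V_{k^\prime},G^\prime)
\]
together with the standard properties of Weil restriction along a finite extension recorded in \cite[Appendix A.3]{Oe} and \cite[Appendix A.5]{CGP}. For (1), the Weil restriction of a linear algebraic group along a finite extension is again a linear algebraic group by \cite[Ch.~7.6]{BLR}. Taking $V=\Spec K$ in the adjunction identifies $G(K)$ with $G^\prime(K\otimes_k k^\prime)$ functorially in $K$; the analogous identification on torsors (Shapiro's lemma) gives $H^1(K,G)\simeq H^1(K\otimes_k k^\prime,G^\prime)$, and $K=k$ yields the required isomorphism.

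For (2) I would first handle the separable case, where $k^\prime\otimes_k\bar k\cong \bar k^{[k^\prime:k]}$, so compatibility of Weil restriction with base change gives $G_{\bar k}\cong\prod_\sigma G^\prime_{\bar k}$ over the embeddings $\sigma\colon k^\prime\hookrightarrow \bar k$, and both connectedness and unipotence transfer between factor and product. In the general case, $k^\prime\otimes_k\bar k$ is a product of finite local Artinian $\bar k$-algebras with residue field $\bar k$, and I would invoke the fact that Weil restriction along such an extension preserves and reflects smoothness, connectedness, and unipotence of smooth affine group schemes. Unipotence in the forward direction can also be seen concretely: $G^\prime\hookrightarrow T_n$ over $k^\prime$ gives a closed immersion $G\hookrightarrow\sR_{k^\prime/k}T_n$, and the latter inherits a composition series from $T_n$ whose successive quotients are $\sR_{k^\prime/k}\G_a\cong\G_a^{[k^\prime:k]}$, so $\sR_{k^\prime/k}T_n$ is unipotent. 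Conversely, the counit $G_{k^\prime}\to G^\prime$ is fppf surjective, so $G^\prime$ inherits unipotence from $G_{k^\prime}$, which in turn inherits it from $G$ by base change.

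For (3), the adjunction with $V=\A^1_k$ gives a natural bijection between $k$-morphisms $\A^1_k\to G$ and $k^\prime$-morphisms $\A^1_{k^\prime}\to G^\prime$, under which constant maps to points of $G(k)$ correspond to constant maps to points of $G^\prime(k^\prime)$; combined with (2) this yields (3). For (4), suppose first $G^\prime$ is $k^\prime$-split, with a composition series whose successive quotients are $\G_a$. Since $H^1$ of $\G_a$ vanishes over any affine scheme, $\sR_{k^\prime/k}$ carries each short exact step to an fppf-exact sequence of smooth $k$-groups, producing a composition series for $G$ with quotients $\sR_{k^\prime/k}\G_a\cong\G_a^{[k^\prime:k]}$, each of which is obviously $k$-split; refining yields a $k$-split series for $G$. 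Conversely, if $G$ is $k$-split then $G_{k^\prime}$ is $k^\prime$-split, and the counit $q\colon G_{k^\prime}\twoheadrightarrow G^\prime$ is an fppf surjection of smooth $k^\prime$-groups. Since any $k^\prime$-homomorphism from a $k^\prime$-split smooth unipotent group to a $k^\prime$-wound one is trivial (by induction on the length of a split series, using that $\G_a$ admits no nontrivial map to a wound group), composing $q$ with $G^\prime\to G^\prime/G^\prime_s$ forces $G^\prime_s=G^\prime$, i.e.\ $G^\prime$ is $k^\prime$-split.

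The main obstacle to carrying this out rigorously is the behavior of Weil restriction along an inseparable finite extension: connectedness and unipotence in (2), exactness of $\sR_{k^\prime/k}$ on the split composition series in (4), and the surjectivity of the counit all ultimately rest on the structural results for Weil restriction along finite local Artinian extensions developed in \cite{CGP} and \cite{Oe}.
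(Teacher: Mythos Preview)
Your proposal is correct and largely parallels the paper, but you diverge in two places worth noting.

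For (3), the paper does not use the defining property of woundness directly. Instead it invokes Oesterl\'e's characterization that a unipotent $k$-group $U$ is $k$-wound if and only if $U(k[[T]])=U(k((T)))$, and then checks via the adjunction that $G(k[[T]])\simeq G^\prime(k^\prime[[T]])$ and $G(k((T)))\simeq G^\prime(k^\prime((T)))$ (using that $k[[T]]\otimes_k k^\prime\simeq k^\prime[[T]]$ for finite extensions). Your approach, plugging $V=\A^1_k$ into the adjunction and tracking constant maps, is more direct and avoids the auxiliary criterion; it works because the adjunction is functorial in $V$, so the inclusion of constant maps (those factoring through $\Spec k$, resp.\ $\Spec k^\prime$) is respected.

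For the converse in (4), the paper argues by contrapositive: assuming $G^\prime$ is not $k^\prime$-split, it applies $\sR_{k^\prime/k}$ to the exact sequence $1\to G^\prime_s\to G^\prime\to G^\prime_w\to 1$ and uses part (3) to conclude that $G$ surjects onto the $k$-wound group $\sR_{k^\prime/k}G^\prime_w$ of positive dimension, hence is not $k$-split. Your route via the counit $G_{k^\prime}\twoheadrightarrow G^\prime$ and the fact that split-to-wound homomorphisms vanish is equally valid; it trades the exactness of $\sR_{k^\prime/k}$ on that particular sequence for the surjectivity of the counit, both of which ultimately rest on the same structural input. The paper's version has the mild advantage that it reuses (3) rather than introducing a new lemma on split-to-wound maps, while yours avoids applying Weil restriction to the quotient sequence.
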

\begin{proof}
 (1) These follow from \cite[Appendix 3, A.3.2]{Oe} and \cite[Chapter IV, 2.3, Corollary]{Oe}.

(2) This is \cite[Appendix 3, A.3.7]{Oe}.

(3) By the definition of Weil restriction, one has
\[
 \begin{aligned}
  G\left(k[[T]]\right)&\simeq G^\prime\left(k[[T]]\otimes_k k^\prime\right) \simeq G^\prime\left(k^\prime[[T]]\right),\\
  G\left(k((T))\right)&\simeq G^\prime\left(k((T))\otimes_k k^\prime\right)\simeq G^\prime\left(k^\prime((T))\right),
 \end{aligned}
\]
where $k[[T]]$, resp. $k^\prime[[T]]$,  is the ring of formal power series in one variable $T$ over $k$, resp. $k^\prime$ and  $k((T))$, resp.  $k^\prime((T))$, is the fraction of $k[[T]]$, resp. $k^\prime[[T]]$ and all  isomorphisms appeared are canonical. (Note that two canonical maps $k[[t]]\otimes_k k^\prime \simeq k^\prime[[t]]$ and $k((t))\otimes_k k^\prime \simeq k^\prime((t))$, $\left(\sum_i a_i t^i\right) \otimes\lambda\mapsto \sum_i \lambda a_i t^i$, are isomorphisms since $k^\prime/k$ is finite.)

By \cite[Chapter V.8, Proposition]{Oe}, for a unipotent algebraic group $U$ over a field $k$, $U$ is $k$-wound if and only if $U(k[[T]])=U(k((T)))$. The assertion then follows from this fact.

(4) First, assume that $G^\prime$ is $k^\prime$-split, we will show that $G$ is $k$-split by induction on $\dim G^\prime$. If $\dim G^\prime=1$, i.e., $G^\prime\simeq_{k^\prime} \G_a$, then $G$ is $k$-isomorphic to $\sR_{k^\prime/k} \G_a=\G_a^{[k^\prime:k]}$, which is $k$-split.

If $\dim G^\prime>1$ then there is a $k$-subgroup $H^\prime$ of $G^\prime$ such that $H^\prime$ is $k^\prime$-split and the quotient $G/H^\prime\simeq_{k^\prime} \G_a$. The exact sequence of $k^\prime$-groups
\[1\to H^\prime \to G^\prime \to \G_a\to 1\]
induces the following exact sequence of $k$-groups (\cite[Appendix 3, A.3.8]{Oe})
\[ 1\to \sR_{k^\prime/k}H^\prime \to \sR_{k^\prime/k} G^\prime=G\to \sR_{k^\prime/k}\G_a=\G_a^{[k^\prime:k]}\to 1.\]
From this exact sequence, we deduce that $G$ is $k$-split.

Second, assume that $G^\prime$ is not $k^\prime$-split we need to show that $G$ is not $k$-split. 
In fact, if $G^\prime$ is not connected then by (2) $G$ is not connected and hence $G$ is not $k$-split. 
We may assume that $G^\prime$ is connected. Let $G^\prime_s$ be the $k^\prime$-split of $G^\prime$. Then  $G^\prime_w:=G^\prime/G^\prime_s$ is $k^\prime$-wound of dimension $\geq 1$. 
We have the following exact sequence of $k^\prime$-groups
\[1\to G^\prime_s\to G^\prime \to G^\prime_w\to 1.\]
This exact sequence induces the following exact sequence of $k$-groups (\cite[Appendix 3, A.3.8]{Oe})
\[1\to \sR_{k^\prime/k}G^\prime_s\to G=\sR_{k^\prime/k}G^\prime \to \sR_{k^\prime/k}G^\prime_w\to 1.\]
As $\sR_{k^\prime/k}G^\prime_w$ is $k$-wound by (3) and of dimension $=[k^\prime:k]\dim G\geq 1$, it implies that $G$ is not $k$-split.
\end{proof}

\subsection{Special versus split unipotent algebraic groups}
\begin{defn}
 Let $k$ be a field, $G$ a smooth unipotent  algebraic $k$-group. We define the following two properties
\[ P(G;k)\hspace{2truecm} H^1(k,G)=0 \text{ if and only if } G \text{ is $k$-split}. \]
and
\[ SP(G;k) \hspace{2truecm} \text{$G$ is special if and only if $G$ is $k$-split}. \]
\end{defn}

\begin{rmks}
 (1) The property $\sP(G/k)$ does not always hold  in general, i.e., there is a field $k$ and a smooth unipotent algebraic $k$-group $G$ such that $H^1(k,G)=0$ but $G$ is not $k$-split.

(2) For any smooth algebraic unipotent $k$-group $G$, $P(G/k)$ implies evidently $SP(G/k)$.
\end{rmks}

Proposition \ref{prop:dim<p-1} can be restated as the following corollary.
\begin{cor}
 \label{cor:<p-1}
Let $k$ be a field of characteristic $p>0$, $v$ a non-trivial valuation of $k$. We assume that  $k$ has a finite $k^p$-valuation basis. Let $G$ a non-trivial smooth connected unipotent algebraic $k$-group of dimension $<p^m-1$. Then the property $P(G;k)$ holds.
\qed
\end{cor}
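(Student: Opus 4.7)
The plan is to deduce the property $P(G;k)$ directly from Proposition \ref{prop:dim<p-1}, which contains essentially all the substance of the statement. The corollary is a repackaging, and the proof should amount to only a few lines checking the two implications hidden in the ``if and only if'' defining $P(G;k)$, namely $H^1(k,G)=0 \Leftrightarrow G \text{ is } k\text{-split}$.

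For the forward implication, I would argue by contrapositive. Suppose $G$ is not $k$-split. Then under the standing hypotheses (non-trivial smooth connected unipotent $k$-group of dimension strictly less than $p^m-1$, with $k$ admitting a finite $k^p$-valuation basis), Proposition \ref{prop:dim<p-1} asserts that $H^1(k,G)$ is infinite, and hence in particular non-trivial. Therefore the vanishing of $H^1(k,G)$ does force $G$ to be $k$-split.

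For the converse implication, I would use the well-known fact that any $k$-split smooth unipotent algebraic group has trivial Galois cohomology. Concretely, by definition $G$ admits a composition series by $k$-subgroups with successive quotients $k$-isomorphic to $\G_a$; combining additive Hilbert 90, $H^1(k,\G_a)=0$, with an induction on the length of this series via the long exact sequence of non-abelian cohomology attached to each step, one concludes $H^1(k,G)=0$. (Equivalently, one can appeal directly to Lemma \ref{lem:surj} in the form used in the proof of Theorem \ref{thm:connected}, applied to the split normal subgroup $G_s=G$ of $G$.)

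Because all the work has been done upstream in Proposition \ref{prop:dim<p-1}, there is no genuine obstacle at this stage; the only point to verify is that the hypotheses required by Proposition \ref{prop:dim<p-1} (non-trivial, smooth, connected, of dimension $<p^m-1$, with $k$ carrying a finite $k^p$-valuation basis) match those of the corollary verbatim, which they do.
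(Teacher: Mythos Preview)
Your proposal is correct and follows exactly the paper's approach: the paper explicitly states that Corollary~\ref{cor:<p-1} is a restatement of Proposition~\ref{prop:dim<p-1} and gives no further proof beyond the \qed, while you simply spell out the two implications implicit in that restatement (the nontrivial one coming from Proposition~\ref{prop:dim<p-1}, the trivial one from the vanishing of $H^1$ for $k$-split groups).
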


\begin{lem}
\label{lem:SP}
Let $k$, $K,L$ be fields such that $L/k$ is a (not necessarily algebraic) separable extension, $L/K$ is a finite extension. Let $G$ be a smooth unipotent algebraic $k$-group. Denote by $H$ the Weil restriction $\sR_{L/K}(G\times_k L)$. Then if $P(H;K)$ holds then $SP(G;k)$ holds.
\end{lem}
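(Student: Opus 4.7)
The plan is to establish the two implications of $SP(G;k)$ separately; only the harder direction, special $\Rightarrow$ $k$-split, will use the hypothesis $P(H;K)$. The easy direction $k$-split $\Rightarrow$ special is unconditional: if $G$ is $k$-split, then for any field extension $F/k$ the base change $G_F$ is still $F$-split (splitness is manifestly preserved by base change), and the second assertion of Lemma \ref{lem:surj}, applied with $U = G_F$, gives a functorial bijection $H^1(F,G_F) \cong H^1(F, G_F/G_F) = 0$, so every $G_F$-torsor over any field is trivial.

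For the converse, suppose $G$ is special. Then the Galois cohomology vanishes over every field extension of $k$; in particular $H^1(L,G_L) = 0$ for $G_L := G\times_k L$. By Lemma \ref{lem:Weil restriction}(1), there is a canonical isomorphism $H^1(K,H) \cong H^1(L,G_L)$, hence $H^1(K,H) = 0$ as well. The hypothesis $P(H;K)$ then yields that $H$ is $K$-split, and by Lemma \ref{lem:Weil restriction}(4) this is equivalent to $G_L$ being $L$-split. Thus we have promoted speciality of $G$ over $k$ to splitness of $G_L$ over $L$.

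It remains to descend $L$-splitness of $G_L$ back to $k$-splitness of $G$; this is where I expect the real content of the argument to lie and why the separability of $L/k$ is indispensable. Since $L/k$ is separable, the formation of the $k$-split part commutes with this base change (as recalled before Theorem \ref{thm:main}), so $(G_s)_L = (G_L)_s = G_L$. Hence the smooth affine $k$-group $Q := G/G_s$ satisfies $Q_L = G_L/(G_s)_L = 1$. Writing $Q = \Sp A$, this means $A\otimes_k L \cong L$ as $L$-algebras; comparing $k$-dimensions gives $\dim_k A = 1$, so $A = k$ and $Q$ is trivial, i.e.\ $G = G_s$ is $k$-split. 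The main obstacle is precisely this faithfully flat (or dimension-count) descent step, which would fail without the separability hypothesis on $L/k$ that ensures $G_s$ commutes with the base change $k \to L$.
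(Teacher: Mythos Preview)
Your proof is correct and follows essentially the same route as the paper's. The paper omits the easy direction (split $\Rightarrow$ special) and, for the descent step from $L$-split to $k$-split, simply invokes \cite[Chapter V.7, Proposition]{Oe}; your argument via $(G_s)_L = (G_L)_s$ and the dimension count on $G/G_s$ is just an unpacking of that same citation, since the commutation of the split part with separable extensions is precisely the content recalled from \cite{Oe} before Theorem~\ref{thm:main}.
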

\begin{proof}
Assume that  $G$ is special, in particular, $H^1(L,G)=0$, we need to show that $G$ is $k$-split. By Lemma \ref{lem:Weil restriction} (1), $H^1(K,H)=H^1(L,G)=0$. Hence as $P(H;K)$ holds, $H$ is $K$-split. Also by Lemma \ref{lem:Weil restriction} (4), $G$ is $L$-split. Since $L/k$ is a separable extension, $G$ is also $k$-split by \cite[Chapter V.7, Proposition]{Oe}.
\end{proof}

\subsection{Proof of Theorem \ref{thm:main2}}
If $k_0$ is perfect then $k$ is perfect and $G$ is always $k$-split and the assertion of the theorem holds trivially. 

From now on, we assume that $k_0$ is not perfect. In particular, it implies that the characteristic of $k_0$ is $p>0$. Note also that the valuation $v$ on $k_0$ is non-trivial since otherwise by Lemma \ref{lem:equality},  $[k_0:k_0^p]=1$, i.e., $k_0$ is perfect, a contradiction.

If $G$ is $k$-split then it is evident that $G$ is special. 

Assume now that $G$ is special, in particular connected. We take a natural number $m$ such that 
\[[k:k_0]\cdot\dim G <[k_0:k_0^p]p^m -1,\]
and choose $m$ variables $y_1,\ldots,y_m$ over $k$. 
We set 
\[L:=k(y_1,\ldots,y_m) \text{ and } K:=k_0(y_1,\ldots,y_m).\]
Then $L/k$ is a separable extension and $L/K$ is a finite extension. 
Denote by $H$ the Weil restriction  $\sR_{L/K}(G\times_k L)$. By Lemma \ref{lem:Weil restriction}, $H$ is a connected unipotent $K$-group with 
\[\dim H=[L:K]\dim (G\times_k L)\leq [k:k_0]\dim G< [k_0:k_0^p] p^m-1=[K:K^p]-1,\]
by the choice of $m$. (The last equality follows from \cite[Chapter V, 16.6, Corollary 3]{Bou1}.) Therefore, Proposition \ref{prop:dim<p-1} implies that $P(H;K)$ holds. Hence by Lemma \ref{lem:SP}, $G$ is $k$-split.
\qed

\subsection{Extension of valuations}
\begin{lem}
\label{lem:extension1}

Let $k$ be a field of characteristic $p>0$ with a valuation $v$, $\Gamma$ its value group. Let $K=k(x_1,\ldots,x_r)$ be the field of rational functions  in $r$ variables $x_1,\ldots,x_r$ with coefficients in $k$. 
Then there is a unique valuation $w$ on $K$ with value group $\Gamma\times \Z\times\cdots\times \Z$, $r$ times, (with lexicographical order from the right) such that $w(a)=(v(a),0,\ldots,0)$ for any $a\in k$ and $w(x_i)=(0,\ldots,1,\ldots,0)$, where $1$ is at the $i+1$-th position.

Furthermore, if $k$ has a finite $k^p$-valuation basis with respect to $v$ then $K$ has a finite $K^p$-valuation basis with respect to $w$. 

\end{lem}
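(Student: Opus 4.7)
The plan is twofold: handle existence and uniqueness of $w$ by induction on $r$, then exhibit an explicit $K^p$-valuation basis.

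For $r=1$, I would define $w$ on $k[x_1]$ by $w\bigl(\sum_i a_i x_1^i\bigr):=\min_i(v(a_i),i)$ under the lex-from-the-right order on $\Gamma\times\Z$, and extend to $K=k(x_1)$ by $w(f/g):=w(f)-w(g)$. Since the second coordinates $i$ of the non-zero terms of a polynomial are pairwise distinct, the minimum is attained by a unique term, and the usual ultrametric verification makes $w$ a valuation with $w(a)=(v(a),0)$ and $w(x_1)=(0,1)$. Uniqueness follows because any valuation with those normalisations must assign $(v(a_i),i)$ to each monomial $a_i x_1^i$, and the ultrametric inequality becomes equality on sums of terms with pairwise distinct values. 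For $r>1$, I iterate this construction, adjoining one variable at a time to the already-valued subfield produced by the inductive hypothesis.

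For the valuation-basis statement, fix a $k^p$-valuation basis $e_1,\ldots,e_n$ of $k$ with $n=[k:k^p]$. I would claim that
\[\Big\{e_i\,x_1^{j_1}\cdots x_r^{j_r}:1\le i\le n,\ 0\le j_l\le p-1\Big\}\]
is a $K^p$-valuation basis of $K$. For the basis property, additivity of Frobenius in characteristic $p$ gives $K^p=k^p(x_1^p,\ldots,x_r^p)$, and the standard tower degree (cf.\ \cite[Chapter V, 16.6, Corollary 3]{Bou1}) yields $[K:K^p]=np^r$, matching the number of proposed basis elements. Spanning is verified by writing any element of $K$ as a polynomial over $k$ divided by a $p$-th power (after multiplying numerator and denominator by $Q^{p-1}$) and then decomposing each monomial $a\,x_1^{n_1}\cdots x_r^{n_r}$ via $n_l=pq_l+j_l$ with $0\le j_l<p$, combined with the $k^p$-expansion $a=\sum_i c_i^p e_i$, into a $K^p$-linear combination of the claimed elements; a dimension count then upgrades spanning to basis.

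For valuation independence, I would compute
\[w\big(e_i\,x_1^{j_1}\cdots x_r^{j_r}\big)=(v(e_i),j_1,\ldots,j_r)\]
and observe that two such tuples coincide modulo $p(\Gamma\times\Z^r)=p\Gamma\times p\Z^r$ iff $v(e_i)\equiv v(e_{i'})\pmod{p\Gamma}$ (forcing $i=i'$ by the valuation-basis hypothesis on the $e_i$) and $j_l\equiv j'_l\pmod{p}$ (forcing $j_l=j'_l$ by the range constraint $0\le j_l,j'_l<p$). Hence the $w$-values are pairwise distinct modulo $p$, which is exactly the required condition. The only mildly delicate ingredient is the identification $K^p=k^p(x_1^p,\ldots,x_r^p)$ together with the tower degree formula; everything else is bookkeeping once the lex-from-right convention is fixed.
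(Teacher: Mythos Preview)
Your proposal is correct and follows essentially the same approach as the paper. The only organisational differences are that the paper dispatches the first assertion by a direct citation to \cite[Chapter VI, Section 10.3, Theorem 1]{Bou2} rather than writing out the Gauss-type construction, and for the second assertion it reduces by induction to the case $r=1$ (adjoining one variable at a time) instead of exhibiting the full basis $\{e_i\,x_1^{j_1}\cdots x_r^{j_r}\}$ in one stroke; the verification of valuation independence and the dimension count are then identical to yours.
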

\begin{proof}
For the first assertion, see \cite[Chapter VI, Section 10.3, Theorem 1]{Bou2}.

For the second assertion, by using induction on $r$, it suffices to consider the case $r=1$.

Let $n=[k:k^p]$ and let $(b_i)_i, 1\leq i\leq  n$ be a valuation basis with respect to $v$ of $k^p$-vector space $k$. Then we show that $(b_ix^j), 1\leq i\leq  n, 0\leq j\leq  p-1$, is a valuation basis with respect to $w$ of $k^p(x^p)$-vector space $k(x)$.

 Assume that $w(b_ix^j)\equiv w(b^kx^l)$ modulo $p$, or equivalently $(v(b_i),j)\equiv (v(b_k),l)$ modulo $p$. Hence $j\equiv l$ modulo $p$ and $v(b_i)\equiv v(b_k)$ modulo $p$. This implies that $j=l$ and $i=k$. Therefore $(b_ix^j), 1\leq i\leq  n, 0\leq j\leq  p-1$, are $k^p(x^p)$-valuation independent with respect to $w$.

It can be check that $[k(x):k^p(x^p)]=[k:k^p]\cdot p=np$. Hence $(b_ix^j), 1\leq i\leq  n, 0\leq j\leq  p-1$, is a $k^p(x^p)$-valuation basis of $k$ with respect to $w$.
\end{proof}

\begin{lem}
\label{lem:extension2}

Let $k$ be a field of characteristic $p>0$ with a valuation $v$, $\Gamma$ its value group. Let $K=k((x_1,\ldots,x_r))$ be the fraction field of the ring of formal power series  in $r$ variables $x_1,\ldots,x_r$ with coefficients in $k$. 
Then there is a  valuation $w$ on $K$ with value group $\Gamma\times \Z\times\cdots\times \Z$, $r$ times, (with lexicographical order from the right) such that $w(a)=(v(a),0,\ldots,0)$ for any $a\in k$ and $w(x_i)=(0,\ldots,1,\ldots,0)$, where $1$ is at the $i+1$-th position.

Furthermore, if $k$ has a finite $k^p$-valuation basis with respect to $v$ then $K$ has a finite $K^p$-valuation basis with respect to $w$. 
\end{lem}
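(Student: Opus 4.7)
The plan is to imitate the proof of Lemma \ref{lem:extension1}, first constructing $w$ directly on $R := k[[x_1,\ldots,x_r]]$ and then producing a finite $K^p$-valuation basis via the $p$-adic decomposition available on formal power series.

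For a nonzero $f = \sum_{I \in \N^r} a_I x^I \in R$, I would set
\[w(f) := (v(a_{I_0}),\, I_0) \in \Gamma \times \Z^r,\]
where $I_0$ is the minimum of the support of $f$ in the reverse lexicographic order on $\N^r$ (last coordinate most significant); this minimum exists because the order is a well-order, isomorphic to the ordinal $\omega^r$. Extending $w$ multiplicatively to $K = \mathrm{Frac}(R)$, the nontrivial point is $w(fg) = w(f) + w(g)$, which reduces to the cancellation statement that if $I \ge I_0$ and $J \ge J_0$ in reverse lex with $I + J = I_0 + J_0$, then $I = I_0$ and $J = J_0$; one verifies this by examining the largest coordinate where $I$ and $I_0$ differ. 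The non-archimedean inequality is straightforward, and the value group contains $(v(a),0,\dots,0)$ for $a \in k^\times$ and $(0,\dots,1,\dots,0)$ for each $x_i$, so it is exactly $\Gamma \times \Z^r$.

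Assume now that $k$ has a finite $k^p$-valuation basis $b_1,\ldots,b_n$ with respect to $v$. I claim the $np^r$ elements $\{b_i x^J : 1 \le i \le n,\ J \in \{0,\dots,p-1\}^r\}$ form a $K^p$-valuation basis of $K$. One first checks that $K^p = k^p((x_1^p,\ldots,x_r^p))$, using that Frobenius commutes with arbitrary formal sums in characteristic $p$. For spanning, every $f \in R$ admits a $p$-adic decomposition $f = \sum_J x^J g_J$ with $g_J \in k[[x_1^p,\ldots,x_r^p]]$, and expanding the coefficients of $g_J$ in the $k^p$-basis $\{b_i\}$ of $k$ together with $(\sum_M c_M x^M)^p = \sum_M c_M^p x^{pM}$ yields $g_J = \sum_i b_i h_{J,i}^p$ with $h_{J,i} \in R$. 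An arbitrary $f \in K$ is of the form $g/h$ with $g,h \in R$; rewriting $f = (g h^{p-1}) \cdot (1/h^p)$ with $1/h^p \in K^p$ and $g h^{p-1} \in R$ reduces spanning in $K$ to spanning in $R$.

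For linear independence and the valuation-basis property, note that $w(b_i x^J) = (v(b_i), J)$, and these $np^r$ classes are pairwise distinct modulo $p$ in $\Gamma \times \Z^r$: the $J$-components distinguish different $J$ since each entry lies in $\{0,\dots,p-1\}$, and the $v(b_i)$ are distinct modulo $p$ by assumption. Consequently, in any relation $\sum_{i,J} \alpha_{i,J}^p b_i x^J = 0$ with $\alpha_{i,J} \in K$, the nonzero summands have $w$-values $p\, w(\alpha_{i,J}) + (v(b_i), J)$ that are pairwise distinct, and the strict triangle inequality forces all terms to vanish. The main obstacle I expect is the bookkeeping for multiplicativity of $w$ in the power-series setting, which was essentially trivial in Lemma \ref{lem:extension1} because polynomials have bounded degree; once the cancellation property for reverse lex is in place, the remainder of the argument parallels that of Lemma \ref{lem:extension1}.
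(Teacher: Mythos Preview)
Your construction of $w$ coincides with the paper's: both define it on $R=k[[x_1,\ldots,x_r]]$ via the leading monomial in the right-lexicographic order and extend to the fraction field, checking multiplicativity by the cancellation property of that order. The genuine difference is in the second assertion. You establish that $\{b_i x^J\}$ spans $K$ over $K^p$ \emph{directly}, by identifying $K^p=k^p((x_1^p,\ldots,x_r^p))$ and using the $p$-adic decomposition of a power series together with the trick $g/h=(gh^{p-1})\cdot h^{-p}$ to reduce from $K$ to $R$. The paper does not prove spanning at all: it checks only valuation independence, hence $K^p$-linear independence, giving $[K:K^p]\ge np^r$, and then obtains the reverse inequality by viewing $K$ as a completion of $L=k(x_1,\ldots,x_r)$ and invoking $[L:L^p]=np^r$ (Bourbaki) together with the general bound $[L:L^p]\ge[K:K^p]$ from Gabber--Orgogozo \cite{GO}. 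Your argument is more elementary and self-contained, avoiding the external $p$-dimension inequality; the paper's argument trades the explicit manipulation of infinite sums for a dimension count.
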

\begin{proof} For simplicity of notation, we write a monomial $x_1^{n_1}\ldots x_r^{n_r}$ as $x^n$, interpreting $x$ as the vector $(x_1,\ldots,x_r)$ and $n$ as $(n_1,\ldots, n_r)$.

We define the map $w: k[[x_1,\ldots,x_r]]\to \Gamma\times\Z^r$ as follows. Define $w(0)=\infty$, and for each element $0\not=\sum_{n} a_nx^n\in k[[x_1,\ldots,x_r]]$, choose the smallest index $n_0$ such that $a_{n_0}\not=0$, and define  
\[w(\sum_{n} x^n):= (v(a_{n_0}),n_0).\]
Then $w$ is a valuation on $k[[x_1,\ldots,x_n]]$, i.e., $w$ satifies
\begin{enumerate}
\item $w(a+b)\geq \min\{w(a), w(b)\}$  for all $a,b\in k[[x_1,\ldots,x_r]]$,
\item $w(ab)=w(a)+w(b)$ for all $a,b\in k[[x_1,\ldots,x_r]]$, 
\item $w(0)=1$ and $w(0)=\infty$. 
\end{enumerate}
In fact, write $a=\sum_{n\geq n_0} a_n x^n$ with $a_{n_0}\not=0$ and $b=\sum_{m\geq m_0} b_{m} x^m$ with $b_{m_0}\not=0$, then we have
\[w(ab)=w\left(\sum_{n\geq n_0,m\geq m_0} a_nb_m x^{n+m}\right)
=(v(a_{n_0}b_{m_0}),n+m)=(a_{n_0},n)+(b_{m_0},m)=w(a)+w(b).\]
For (2), without the loss of generality we may assume that $n_0\leq m_0$ then $(v(a_{n_0}),n_0)\leq (v(b_{m_0}),m_0)$. If $n_0< m_0$ then 
\[v(a+b)=(v(a_{n_0}),n_0)=\min \{w(a),w(b)\}.\]
If $n_0=m_0$ then $v(a_{n_0})\leq v(b_{n_0})$ and
\[v(a+b)=(v(a_{n_0}+b_{n_0}),n_0)\geq (v(a_{n_0}),n_0)=\min \{w(a),w(b)\}.\]
Condition (3) is trivial.

By \cite[Chapter VI, Section 10, Proprosition 4]{Bou2}, we can extend uniquely $w$ to a valuation $w:K=k((x_1,\ldots,x_n))\to \Gamma\times\Z^r$.

For the last assertion, let $s=[k:k^p]$ and $b_1,\ldots,b_s$ is a $k^p$-valuation basis of $k$ then one can check that the values $v(b_i x_1^{n_1}\cdots x_r^{n_r})$, $1\leq i \leq s$, $0\leq n_1,\ldots,n_r\leq p-1$ are pairwise distinct modulo $p$. In particular, these elements $b_i x_1^{n_1}\cdots x_r^{n_r}$ are $k^p$-linearly independent. This implies that $[K:K^p]\geq sp^r$. 

On the other hand, $K=k((x_1,\ldots,x_r))$ is the completion of $L=k(x_1,\ldots,x_r)$ with respect to the valuation $w^\prime$ corresponding to $(x_1,\ldots,x_r)$ (note that in general $w^\prime$ is different from $w$ constructed as above). Then one has 
\[sp^r=[L:L^p]\geq [K:K^p],\]
the first equality follows from \cite[Chapter V, 16.6, Corollary 3]{Bou1} and the second inequality follows from \cite[Lemma 2.1.2]{GO}. Therefore, $[K:K^p]=sp^r$ and  the elements $b_i x_1^{n_1}\cdots x_r^{n_r}$, $1\leq i \leq s$, $0\leq n_1,\ldots,n_r\leq p-1$, form a $K^p$-valuation basis of $K$.
\end{proof}

\subsection{Geometric fields and Corollary \ref{cor:main3}}

Lemma \ref{lem:extension1} and Lemma \ref{lem:extension2} motivate the following definition.
\begin{defn}
\label{defn:geometric}
 Let $k\subset K$ be two fields. We say that $K$ is {\it geometric} over $k$ if there is a tower of finite length of field extensions 
\[K=K_0\supset K_1\supset K_2 \supset \cdots \supset K_n=k\]
such that $K_0\supset K_1$ is a finite field extension and for each $i\geq 1$, we have 
\begin{enumerate}
 \item $K_i=K_{i+1}(x_1,\ldots,x_r)$ for some variables $x_1,\ldots,x_{r_i}$ or
 \item $K_i=K_{i+1}((y_1,\ldots,y_r))$ for some variables $y_1,\ldots,y_{s_i}$.
\end{enumerate}
\end{defn}

\begin{cor}
\label{cor:geometric}
 Let $K$ be a field which is geometric over a perfect field $k$. Let $G$ be a non-trivial smooth unipotent algebraic $K$-group. Then $\ed_K(G)=0$ if and only if $G$ is $K$-split.
\end{cor}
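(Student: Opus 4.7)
The plan is to reduce the corollary directly to Theorem \ref{thm:main2}. The easy direction, that $G$ being $K$-split implies $\ed_K(G)=0$, is standard: a split smooth unipotent group has trivial first Galois cohomology over every extension, by devissage along a composition series whose successive quotients are $\G_a$ combined with $H^1(-,\G_a)=0$, and is therefore special. So the content lies entirely in the converse: assuming $G$ is non-trivial, smooth unipotent, and special, I must show $G$ is $K$-split.

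If $\Char k = 0$ then $\Char K = 0$, since both operations in Definition \ref{defn:geometric} preserve characteristic, and every smooth unipotent $K$-group is automatically split. So I assume $\Char k = p > 0$ and fix a tower $K=K_0 \supset K_1 \supset \cdots \supset K_n=k$ witnessing geometricity. The idea is to take $k_0 := K_1$ as the base field in Theorem \ref{thm:main2}: the extension $K/K_1$ is finite by definition, so it suffices to produce a valuation on $K_1$ together with a finite $K_1^p$-valuation basis, after which Theorem \ref{thm:main2} forces $G$ to be $K$-split.

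Such a valuation I would build by descending induction on $i$, constructing for each $i<n$ a valuation $v_i$ on $K_i$ with a finite $K_i^p$-valuation basis. The base of the induction is the trivial valuation on $K_n = k$: since $k$ is perfect, $[K_n:K_n^p]=1$ and the singleton $\{1\}$ is vacuously a valuation basis (the pairwise-distinctness condition being empty for a one-element set). The inductive step is an immediate application of Lemma \ref{lem:extension1} (for type (1) steps $K_i = K_{i+1}(x_1,\ldots,x_{r_i})$) or Lemma \ref{lem:extension2} (for type (2) steps $K_i = K_{i+1}((y_1,\ldots,y_{s_i}))$). The only subtlety worth flagging is to verify that these two lemmas accept the trivial valuation on a perfect field as legitimate input and that the perfect-field base case genuinely satisfies their hypothesis; both are immediate from their statements. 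Otherwise the argument is a straightforward assembly of previously established ingredients along the geometric tower, and I do not foresee any substantive obstacle.
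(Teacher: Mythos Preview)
Your proposal is correct and follows essentially the same route as the paper: start from the trivial valuation on the perfect base $k=K_n$, climb the tower using Lemmas~\ref{lem:extension1} and~\ref{lem:extension2} to obtain a valuation on $K_1$ with a finite $K_1^p$-valuation basis, and then invoke Theorem~\ref{thm:main2} with $k_0=K_1$ and $k=K$. Your write-up is in fact slightly tidier than the paper's in two respects: you explicitly dispose of characteristic~$0$ (which Theorem~\ref{thm:main2} does not cover), and you correctly stop the valuation extension at $K_1$ rather than $K_0$, which is exactly what Theorem~\ref{thm:main2} needs.
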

\begin{proof}
By assumption there is a tower of finite length of field extensions 
\[K=K_0\supset K_1\supset K_2 \supset \cdots \supset K_n=k\]
as in Definition \ref{defn:geometric}. If $K_1=K_n=k$ then $K$ is perfect. 
 The corollary then holds trivially.

Now assume that $K_1\not=K_n $. On $K_n=k$ we consider the trivial valuation $w_n$. Then since $K_n$ is perfect, $K_n$ has a finite $K_n^p$-valuation basis with respect to $w_n$, namely $\{1\}$. Therefore by Lemma \ref{lem:extension1} and Lemma \ref{lem:extension2}, the valuation $w_n$ extend to a \emph{non-trivial} valuation $v$ on $K_0$ so that $K_0$ has a finite $K_0^p$-valuation basis with respect to $v$. The corollary now follows from Theorem \ref{thm:main2}.
\end{proof}
Corollary \ref{cor:main3} is just a very special case of Corollary \ref{cor:geometric}. 

\subsection{Unipotent algebraic groups of dimension one}
Over fields which are geometric over a perfect field, we can compute the essential dimension of smooth unipotent algebraic group of dimension $1$ as follow.
\begin{prop}
 Let $k$ be a field which is geometric over a perfect field. Let $G$ be a smooth connected unipotent algebraic $k$-group of dimension $1$. Then $\ed_k(G)=0$ if $G$ is $k$-isomorphic to $\G_a$ and $\ed_k(G)=1$ otherwise.
\end{prop}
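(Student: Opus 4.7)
The plan is to split into the two cases stated and reduce each to results already available in the paper.

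First, suppose $G$ is $k$-isomorphic to $\G_a$. Then by additive Hilbert 90 we have $H^1(K,\G_a)=0$ for every field extension $K/k$, so $G$ is special and $\ed_k(G)=0$ by the characterization of essential dimension zero recalled in the Introduction (\cite[Proposition 4.4]{Me}, \cite[Proposition 4.3]{TV}). Equivalently, $G\cong\G_a$ is tautologically $k$-split, so we may also invoke Corollary \ref{cor:geometric} directly.

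Now suppose $G$ is not $k$-isomorphic to $\G_a$. For the upper bound, since $G$ is a smooth connected unipotent $k$-group, Corollary \ref{cor:connected} gives $\ed_k(G)\leq \dim G=1$. For the lower bound, I will argue that $G$ cannot be $k$-split: any $k$-split smooth connected unipotent group admits a composition series by $k$-subgroups with successive quotients $k$-isomorphic to $\G_a$, and in dimension one this composition series reduces to $G\cong_k\G_a$. Since $G\not\cong_k\G_a$ by assumption, $G$ is not $k$-split. Applying Corollary \ref{cor:geometric} (which is where the hypothesis that $k$ is geometric over a perfect field enters), we conclude $\ed_k(G)\neq 0$, hence $\ed_k(G)\geq 1$. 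Combined with the upper bound, this yields $\ed_k(G)=1$.

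There is essentially no hard step: the whole content is bookkeeping between the structural fact that a 1-dimensional split unipotent group must be $\G_a$ and the two cited results (Corollary \ref{cor:connected} for the upper bound and Corollary \ref{cor:geometric} for the lower bound). The only point that deserves a brief comment is that the characteristic zero case is subsumed, since over a perfect field every smooth connected unipotent group is $k$-split and one-dimensional implies $G\cong_k\G_a$, so the statement holds trivially and we are automatically in the first case.
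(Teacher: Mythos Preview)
Your proof is correct and follows essentially the same strategy as the paper: the case $G\cong_k\G_a$ is immediate, and in the other case the lower bound comes from Corollary \ref{cor:geometric} (using that a one-dimensional $k$-split unipotent group must be $\G_a$). The only minor difference is in the upper bound: the paper invokes Lemma \ref{lem:1} directly after observing that $G$ is commutative, annihilated by $p$, and that $k$ is infinite (being imperfect), whereas you appeal to the already-packaged Corollary \ref{cor:connected}; since Corollary \ref{cor:connected} is itself derived from Lemma \ref{lem:1}, this is the same argument at one remove, and your version has the small convenience of not needing to recall the structural facts about one-dimensional forms of $\G_a$.
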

\begin{proof}
 If $G\simeq_k \G_a$ then it is trivial that $\ed_k(G)=0$.

Assume now that $G$ is not $k$-isomorphic to $\G_a$, i.e., $G$ is not $k$-split. In particular, it implies that $k$ is not perfect and hence infinite.  It is well-known that $G$ is commutative and annihilated by $p$ ($G$ is in fact a $k$-form of $\G_a$). Therefore, by Lemma \ref{lem:1}, $\ed_k(G)\leq 1$. 

On the other hand, by Corollary \ref{cor:geometric}, $\ed_k(G)\leq 1$ since $G$ is not $k$-split. Therefore, $\ed_k(G)=1$.
\end{proof}

\end{document}